\begin{document}
%\pagewiselinenumbers
%\linenumberdisplaymath

\newtheorem{theorem}{Theorem}[section]

\newtheorem{corollary}[theorem]{Corollary}
\newtheorem{definition}[theorem]{Definition}
\newtheorem{conjecture}[theorem]{Conjecture}
\newtheorem{question}[theorem]{Question}
\newtheorem{lemma}[theorem]{Lemma}
\newtheorem{proposition}[theorem]{Proposition}
\newtheorem{example}[theorem]{Example}
\newtheorem{problem}[theorem]{Problem}
\newenvironment{proof}{\noindent {\bf
Proof.}}{\rule{3mm}{3mm}\par\medskip}
\newcommand{\remark}{\medskip\par\noindent {\bf Remark.~~}}
\newcommand{\pp}{{\it p.}}
\newcommand{\de}{\em}
 \newcommand{\LMA}{{\it Linear and Multilinear Algebra},  }
\newcommand{\JEC}{{\it Europ. J. Combinatorics},  }
\newcommand{\JCTB}{{\it J. Combin. Theory Ser. B.}, }
\newcommand{\JCT}{{\it J. Combin. Theory}, }
\newcommand{\JGT}{{\it J. Graph Theory}, }
\newcommand{\ComHung}{{\it Combinatorica}, }
\newcommand{\DM}{{\it Discrete Math.}, }
\newcommand{\ARS}{{\it Ars Combin.}, }
\newcommand{\SIAMDM}{{\it SIAM J. Discrete Math.}, }
\newcommand{\SIAMADM}{{\it SIAM J. Algebraic Discrete Methods}, }
\newcommand{\SIAMC}{{\it SIAM J. Comput.}, }
\newcommand{\ConAMS}{{\it Contemp. Math. AMS}, }
\newcommand{\TransAMS}{{\it Trans. Amer. Math. Soc.}, }
\newcommand{\AnDM}{{\it Ann. Discrete Math.}, }
\newcommand{\NBS}{{\it J. Res. Nat. Bur. Standards} {\rm B}, }
\newcommand{\ConNum}{{\it Congr. Numer.}, }
\newcommand{\CJM}{{\it Canad. J. Math.}, }
\newcommand{\JLMS}{{\it J. London Math. Soc.}, }
\newcommand{\PLMS}{{\it Proc. London Math. Soc.}, }
\newcommand{\PAMS}{{\it Proc. Amer. Math. Soc.}, }
\newcommand{\JCMCC}{{\it J. Combin. Math. Combin. Comput.}, }
\newcommand{\GC}{{\it Graphs Combin.}, }

\title{ The Algebraic Connectivity and the Clique Number of Graphs
\thanks{ This work is supported by National Natural Science
Foundation of China (No:10971137 and 11271256), the National Basic Research
Program (973) of China (No.2006CB805900).}}
\author{   Ya-Lei Jin  and Xiao-Dong Zhang \\
{\small Department of Mathematics, and MOE-LSC,}\\
{\small Shanghai Jiao Tong University} \\
{\small  800 Dongchuan road, Shanghai, 200240, P.R. China}\\
{\small Email: zzuedujinyalei@163.com (Y.-L. Jin),  xiaodong@sjtu.edu.cn (X.-D. Zhang)}
 }
\date{}
\maketitle
 \begin{abstract}

This paper investigates some relationship between the algebraic connectivity and the clique number of graphs. We characterize all extremal graphs which have the maximum and minimum the algebraic connectivity among all graphs of order $n$ with the clique number $r$, respectively. In turn, an upper and lower bounds for the clique number of a graph in terms of the algebraic connectivity are obtained. Moreover, a spectral version of the Erd\H{o}s-Stone theorem in terms of the algebraic connectivity of graphs is presented.

 \end{abstract}

{{\bf Key words:} \ Algebraic connectivity; clique number;  Tur\'{a}n graph; Erd\H{o}s-Stone theorem.
 }

      {{\bf MSC:} 05C50, 05C35}
\vskip 0.5cm

\section{Introduction}
Throughout this paper,  we only consider  simple graphs.
  Let $G = (V(G),~E(G))$ be a simple graph with vertex set
$V(G)=\{v_1, \dots, v_n\}$ and edge set $E(G)$.  Let $A(G)=(a_{ij})$
be the $(0,1)$ {\it  adjacency matrix} of $G$ with $a_{ij}=1$ for
$v_i\sim v_j$ and $0$ otherwise, where $``\sim"$ stands for the adjacency relation. Moreover, let
$D(G)=diag(d(u), u\in V)$ be the diagonal matrix of vertex degree
$d(u)$ of $G$. Then  $L(G)=D(G)-A(G)$ is called {\it the Laplacian matrix }
 of $G$,
 %The Laplacian matrix $L(G)$ can be
  %viewed as an operator on the space of functions
 %$f:V(G)\rightarrow \mathcal{R}$ which satisfies
 %$$L(G)f(u)=\sum_{v\sim u}(f(u)-f(v)).$$
%We order
 and the eigenvalues of $L(G)$  are denoted by
$\lambda_1(G)\geq\lambda_2(G)\geq\dots\geq\lambda_{n-1}(G)\geq\lambda_n(G)=0$.
In particular,  $\lambda_{n-1}(G)$ of $L(G)$ is called \cite{fiedler1973} the {\it algebraic connectivity} of $G$, denoted by $\alpha(G)$,  which plays a vitally important role in graph theory, because it relates with many fundamental graph properties, such as expansion, quasirandom (for example, see \cite{F. Chung1997} and \cite{Nikiforov2013} and references therein).

  In 1941,Tur\'{a}n \cite{Turan1941} determined the maximal number of edges of a graph $G$ which does not contain a copy of the complete graph $K_{r+1}$, which started the research of the extremal theory of graphs. Let $T_{n,r}$,  called {\it $Tur\acute{a}n$ graph}, be the complete $r$-partite graph of order $n$, and the size of every class of which is $\lceil\frac{n}{r}\rceil$ or $\lfloor\frac{n}{r}\rfloor$.
    \begin{theorem}\cite{Turan1941}\label{turan theorem}
Let $G$ be a graph of order $n$ not containing $K_{r+1}$. Then $e(G)\leq e(T_{n,r})$ with equality holding if and only if $G=T_{n,r}$, where $e(G)$ is the number of edges in $G$.
\end{theorem}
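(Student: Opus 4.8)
The plan is to use Zykov's symmetrization argument, which proves the bound and the uniqueness simultaneously. By finiteness, among all $K_{r+1}$-free graphs on $n$ vertices I choose one, say $G$, with the maximum number of edges. The heart of the argument is to show that such an extremal $G$ must be a complete multipartite graph, which I would establish by proving that the relation ``$u$ is non-adjacent to $v$'' is an equivalence relation on $V(G)$. Reflexivity (no self-loops) and symmetry are immediate, so the real content is transitivity.

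For transitivity, suppose toward a contradiction that $u\not\sim v$ and $v\not\sim w$ but $u\sim w$, and argue by comparing degrees. The key operation is \emph{cloning}: replacing a vertex $x$ by a vertex whose neighborhood is copied from another vertex $y$ (deleting all edges at $x$ and joining $x$ to exactly the neighbors of $y$). Since $x$ and $y$ then share a neighborhood and are non-adjacent, no clique can contain both, so any $K_{r+1}$ in the modified graph would, after swapping the clone for $y$, produce a $K_{r+1}$ in $G$; hence cloning preserves $K_{r+1}$-freeness. If $d(v)<\max\{d(u),d(w)\}$, say $d(v)<d(u)$, then cloning $u$ onto $v$ changes the edge count by $d(u)-d(v)>0$, contradicting maximality. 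Otherwise $d(v)\geq d(u)$ and $d(v)\geq d(w)$, and cloning $v$ onto both $u$ and $w$ changes the edge count by $2d(v)-d(u)-d(w)+1\geq 1>0$, where the $+1$ recovers the single lost edge $uw$; again a contradiction. Hence non-adjacency is transitive, its classes are independent sets, vertices in distinct classes are adjacent, and $G$ is complete multipartite. Because $G$ is $K_{r+1}$-free it has at most $r$ parts.

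It then remains to optimize over complete multipartite graphs on $n$ vertices with at most $r$ parts. Writing the part sizes as $n_1,\dots,n_k$ with $\sum_{i}n_i=n$, one computes $e(G)=\frac{1}{2}\left(n^2-\sum_{i=1}^{k}n_i^2\right)$, so maximizing $e(G)$ is equivalent to minimizing $\sum_{i}n_i^2$. Splitting a part when $k<r$ strictly decreases this sum, so an optimum uses exactly $r$ parts; and if two parts satisfied $n_i\geq n_j+2$, moving a single vertex from the larger part to the smaller one would strictly decrease $\sum_i n_i^2$ as well. By convexity the unique minimizer therefore has all parts differing in size by at most one, which is precisely $T_{n,r}$. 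Since $T_{n,r}$ is itself $K_{r+1}$-free and is the unique maximizer, this yields both $e(G)\leq e(T_{n,r})$ and the equality characterization $G=T_{n,r}$.

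I expect the main obstacle to be the degree case analysis in the transitivity step: one must set up the cloning operation carefully so that $K_{r+1}$-freeness is manifestly preserved, and the bookkeeping of gained and lost edges — especially the single recovered edge $uw$ in the second case — must be shown to yield a strictly positive net change. The concluding convexity optimization, together with the existence of an extremal graph, is routine by comparison. An alternative route is Turán's original induction on $r$ (locate a $K_r$ and count edges between it and the rest), but I find the symmetrization argument cleaner and better suited to extracting the equality case directly.
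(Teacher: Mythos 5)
Your proposal is correct, but there is nothing in the paper to compare it against: the statement is Tur\'{a}n's classical theorem, which the paper simply cites from the 1941 original and never proves. Your Zykov symmetrization argument is a complete and valid proof of both the inequality and the equality case. The key points all check out: cloning preserves $K_{r+1}$-freeness because the clone and its template are non-adjacent vertices with identical neighborhoods, so no clique can contain both; the edge count in the two-sided cloning case is $2d(v)-d(u)-d(w)+1\geq 1>0$, with the $+1$ correctly accounting for the edge $uw$ being counted twice in $d(u)+d(w)$ when deleted; and since the argument shows that \emph{every} edge-maximal $K_{r+1}$-free graph is complete multipartite (not merely that some maximizer is), the subsequent convexity optimization over part sizes delivers the uniqueness statement $G=T_{n,r}$, not just the bound. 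This is a genuinely different route from Tur\'{a}n's original induction on $r$ (which you mention as an alternative), and it is arguably better suited to the equality characterization, since the induction route requires extra care to track when equality can propagate through the inductive step. The only cosmetic gap is the degenerate boundary case $n\leq r$, where one cannot split into $r$ nonempty parts; there $T_{n,r}=K_n$ and the theorem is trivial, but a complete write-up should dispose of this case explicitly before asserting that an optimum uses exactly $r$ parts.
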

   Erd\H{o}s and Stone \cite{erdos1946} (see also, Erd\H{o}s and Simonovits \cite{erdos1966})  expanded the above results.
Let $\mathcal{H}$ be the set of graphs and  $\psi(\mathcal{H})=min\{\chi(H)|H\in\mathcal{H}\}-1$, where $\chi(H)$ is the chromatic number of $H$.   The well-known Erd\H{o}s-Stone theorem~\cite{bondy2008} can be stated as follows:
\begin{theorem}(\cite{erdos1946, erdos1966})\label{ESS thm}
 Let $ex(n, \mathcal{H})$ be the maximum number of edges of a graph with order $n$ not containing a copy of any graph in $\mathcal{H}$.
If  $\psi(\mathcal{H})>1$, then
\begin{equation}
\lim_{n\rightarrow \infty}\frac{ex(n, \mathcal{H})}{\left(\begin{array}{c}
n\\ 2\end{array}\right)}=1-\frac{1}{\psi(\mathcal{H})}.
\end{equation}
\end{theorem}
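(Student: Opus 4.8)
The plan is to sandwich $ex(n,\mathcal{H})$ between two quantities with the common limit $1-\frac{1}{\psi(\mathcal{H})}$. Write $r=\psi(\mathcal{H})$, so that $\min\{\chi(H):H\in\mathcal{H}\}=r+1$ and, by hypothesis, $r\geq 2$. First I would record the elementary asymptotics of the Tur\'an graph: a direct count gives $e(T_{n,r})=(1-\frac{1}{r}){n\choose 2}+O(n)$, hence $e(T_{n,r})/{n\choose 2}\to 1-\frac{1}{r}$. The theorem then reduces to the two estimates $e(T_{n,r})\leq ex(n,\mathcal{H})$ and, for every fixed $\varepsilon>0$ and all large $n$, $ex(n,\mathcal{H})<(1-\frac{1}{r}+\varepsilon){n\choose 2}$.

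For the lower bound I would exhibit $T_{n,r}$ itself as a graph containing no member of $\mathcal{H}$. Being $r$-partite, $T_{n,r}$ is $r$-colourable, whereas every $H\in\mathcal{H}$ satisfies $\chi(H)\geq r+1>r$; since any subgraph of an $r$-colourable graph is again $r$-colourable, no $H\in\mathcal{H}$ can embed into $T_{n,r}$. Thus $T_{n,r}$ is $\mathcal{H}$-free and $ex(n,\mathcal{H})\geq e(T_{n,r})$, which already gives $\liminf_{n\to\infty}ex(n,\mathcal{H})/{n\choose 2}\geq 1-\frac{1}{r}$.

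The substance of the proof is the matching upper bound. Here I would fix a graph $H_0\in\mathcal{H}$ realizing the minimum, $\chi(H_0)=r+1$, and set $t=|V(H_0)|$. Distributing its $r+1$ colour classes into $r+1$ distinct parts shows $H_0\subseteq K_{r+1}(t)$, the complete $(r+1)$-partite graph with $t$ vertices in each part. It therefore suffices to prove the \emph{core} Erd\H{o}s--Stone statement: for every $\varepsilon>0$ there is $n_0$ such that every graph on $n\geq n_0$ vertices with at least $(1-\frac{1}{r}+\varepsilon){n\choose 2}$ edges contains $K_{r+1}(t)$, for such a graph then contains $H_0$ and hence a member of $\mathcal{H}$. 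I would prove this core statement by induction on $r$. The base case $r=1$ asks that positive edge density force $K_{t,t}=K_2(t)$, which follows from the K\H{o}v\'ari--S\'os--Tur\'an counting argument (count the stars $K_{1,t}$ and apply convexity). For the inductive step I would first delete vertices of small degree to pass to a subgraph of large minimum degree without losing the density surplus, then apply the inductive hypothesis to find a copy of $K_r(T)$ for a parameter $T$ chosen large in terms of $t$ and $\varepsilon$, and finally use the large minimum degree to locate many vertices outside this copy, each having at least $t$ neighbours in every one of its $r$ parts.

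The hard part will be turning these outside vertices into the $(r+1)$-st part of a $K_{r+1}(t)$. Each such vertex picks, in every one of the $r$ parts, a $t$-element set of neighbours, and there are only ${T\choose t}^{r}$ possible patterns of picks; since the density condition supplies of order $n$ qualifying outside vertices while this number of patterns is a constant for $T,t,r$ fixed, a pigeonhole estimate produces $t$ outside vertices sharing a single pattern as soon as $n$ is large. Those $t$ vertices together with the $r$ shared $t$-sets span a $K_{r+1}(t)$. Once the core statement is established, combining it with the lower bound of the second paragraph yields $ex(n,\mathcal{H})/{n\choose 2}\to 1-\frac{1}{r}=1-\frac{1}{\psi(\mathcal{H})}$, as claimed.
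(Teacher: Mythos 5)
The paper contains no proof of this statement: Theorem~\ref{ESS thm} is quoted as a classical result, with citations to Erd\H{o}s--Stone and Erd\H{o}s--Simonovits, and it serves only as background for the spectral analogue, Theorem~\ref{spec-ESS}. Your proposal therefore has to be judged against the literature rather than against anything in the paper, and on that score it is correct: it is the standard proof. The lower bound via the $r$-colourability of $T_{n,r}$ (no $H$ with $\chi(H)\ge r+1$ embeds in an $r$-colourable graph) is right; the reduction of the upper bound to the core statement that edge density $1-\frac{1}{r}+\varepsilon$ forces $K_{r+1}(t)$, by embedding a chromatic-number-minimizing $H_0\in\mathcal{H}$ into $K_{r+1}(t)$, is right; and your induction on $r$ --- K\H{o}v\'ari--S\'os--Tur\'an base case, passing to a subgraph of large minimum degree, an inductive copy of $K_r(T)$ with $T$ large in terms of $t$ and $\varepsilon$, the count showing that a positive proportion of the outside vertices have at least $t$ neighbours in every part, and the pigeonhole over the ${T\choose t}^r$ neighbourhood patterns --- is the classical Erd\H{o}s--Stone argument as presented, for instance, in Bollob\'as's textbook. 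Two steps you leave implicit are routine but should be spelled out in a full write-up: (i) the minimum-degree cleaning must retain a vertex set whose size still tends to infinity, and (ii) the inductive hypothesis is applicable because a density of $1-\frac{1}{r}+\varepsilon$ exceeds the level-$(r-1)$ threshold $1-\frac{1}{r-1}$ by at least the constant $\frac{1}{r(r-1)}$, so the required surplus does not degenerate as the induction descends. Finally, it is worth observing that your core statement is precisely what the paper imports as Lemma~\ref{lem2-1} (Bollob\'as--Erd\H{o}s--Simonovits, in the stronger form where the common part size $g(n,r+1,c)$ tends to infinity with $n$), and the paper's proof of Theorem~\ref{spec-ESS} consumes that lemma in exactly the way your upper bound consumes the core statement.
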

The theorem gave an insight into the intrinsic role of the chromatic number in extremal graph theory.   During the past over ten years, the spectral extremal graph theory, which establishes the relationship between graph properties and the eigenvalues of certain matrices associated with graphs, have attracted  a lot of attention. Many classical extremal results have been stated, expanded and improved in spectral statement. For example,
 Nikiforov \cite{nikiforov2007} and Guiduli \cite{guiduli1998} had independently proved a spectral extremal
Tur\'{a}n theorem:
\begin{theorem}( \cite{guiduli1998, nikiforov2007}) \label{spec-turan-thm}  Let $\rho(G)$ be the largest eigenvalues of
the adjacency matrix of $G$ of order $n$ not containing  complete subgraph $K_{r+1}$  as a subgraph. Then $\lambda(G)\le \lambda(T_{n,r})$
with equality if and only if $G=T_{n, r}$.
\end{theorem}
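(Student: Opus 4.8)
The plan is to argue by a double symmetrization: first show that a $K_{r+1}$-free graph of maximum spectral radius must be complete multipartite, and then show that among complete multipartite graphs on $n$ vertices the balanced one $T_{n,r}$ is optimal. Throughout, write $\rho(G)=\rho$ for the largest eigenvalue of $A(G)$ (the quantity denoted $\lambda$ in the statement) and let $x=(x_u)_{u\in V(G)}$ be a nonnegative Perron eigenvector. Since an edge joining two distinct components never creates a $K_{r+1}$ (for $r\ge 2$) yet increases $\rho$, the extremal graph $G$ may be taken connected, whence $x>0$ by Perron--Frobenius.

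First I would establish the non-strict bound. Let $G$ be $K_{r+1}$-free on $n$ vertices with $\rho(G)$ maximal, and suppose non-adjacency is \emph{not} transitive, so that there are vertices $u,v,w$ with $u\not\sim v$, $v\not\sim w$, $u\sim w$; label them so that $x_u\ge x_w$. Form $G'$ by deleting all edges at $w$ and joining $w$ to exactly $N(u)$, making $u,w$ non-adjacent ``twins''. Any $K_{r+1}$ in $G'$ through $w$ yields one in $G$ upon replacing $w$ by $u$ (legitimate since $u\notin N(u)=N_{G'}(w)$), while a $K_{r+1}$ avoiding $w$ already lies in $G$; hence $G'$ is $K_{r+1}$-free. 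Using the eigen-equation $\sum_{z\in N(u)}x_z=\rho x_u$, a one-line computation gives
\begin{equation}
x^{T}A(G')x-x^{T}A(G)x=2\rho\,x_w\,(x_u-x_w)\ge 0,
\end{equation}
so that $\rho(G')\ge x^{T}A(G')x/\|x\|^{2}\ge\rho(G)$. Iterating this duplication merges vertices into independent classes without decreasing $\rho$ and without creating a $K_{r+1}$, terminating at a complete multipartite graph; since a complete $k$-partite graph contains $K_k$, the number of parts is at most $r$.

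It then remains to optimize over complete multipartite graphs. Splitting any part of size $\ge 2$ into two nonempty parts only adds edges, hence strictly increases $\rho$ (the graph stays connected with positive Perron vector) while keeping it $(\le r)$-partite, so the optimum uses exactly $r$ parts (for $n\le r$ one has $G=K_n=T_{n,r}$ directly). Finally, if two parts differ in size by at least $2$, moving one vertex from the larger to the smaller part strictly increases $\rho$; this is a short computation exploiting that the Perron vector is constant on each part, comparing the two resulting quotient matrices. Repeating balances the sizes to within one, which is exactly $T_{n,r}$, proving $\rho(G)\le\rho(T_{n,r})$.

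I expect the main obstacle to be the equality case, i.e. upgrading each ``$\ge$'' above to ``$>$'' unless $G=T_{n,r}$ already. The duplication step only gives a weak inequality when $x_u=x_w$, so for uniqueness I would track the equality discussion of the Rayleigh bound: $\rho(G')=\rho(G)$ forces $x$ to be a Perron vector of $G'$ as well with $x_u=x_w$, and I would combine this rigidity with the \emph{strict} increase in the splitting and balancing steps to conclude that any extremal graph which is not already complete multipartite and balanced can be strictly improved. Carrying out this rigidity bookkeeping cleanly — rather than the individual inequalities, which are routine — is the delicate part of the argument.
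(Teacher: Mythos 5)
First, a point of comparison: the paper does not actually prove this statement — it is quoted as Theorem 1.3 with citations to Guiduli and Nikiforov — so your proposal can only be judged on its own merits. Your overall plan (symmetrize a maximizer into a complete multipartite graph, then split and balance the parts) is indeed the standard route to this theorem, essentially Zykov symmetrization adapted to the Rayleigh quotient.

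However, the central symmetrization step is genuinely wrong as written. You pick $u\not\sim v$, $v\not\sim w$, $u\sim w$ with $x_u\ge x_w$ and duplicate $w$ onto $u$. Since $u\sim w$, we have $w\in N(u)$, so ``joining $w$ to exactly $N(u)$'' is impossible (it would require a loop at $w$); the operation must be $N_{G'}(w)=N(u)\setminus\{w\}$, and in particular the edge $uw$ gets deleted. Redoing your one-line computation with this correction gives
\[
x^{T}A(G')x-x^{T}A(G)x=2x_w[(\rho x_u-x_w)-\rho x_w]=2x_w[\rho(x_u-x_w)-x_w],
\]
not $2\rho x_w(x_u-x_w)$. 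This is strictly negative when $x_u=x_w$, and more generally whenever $x_u-x_w<x_w/\rho$, so your step can strictly \emph{decrease} the Rayleigh quotient — and it fails exactly in the near-equality regime you would need to control for uniqueness. The repair is to duplicate only across non-edges, with a two-case split on the middle vertex $v$ (the one non-adjacent to both $u$ and $w$): if $x_v<\max\{x_u,x_w\}$, say $x_u$ is the larger, replace $N(v)$ by $N(u)$ (legal since $v\notin N(u)$; $v$ becomes a non-adjacent twin of $u$, so $K_{r+1}$-freeness is preserved), and the gain is $2\rho x_v(x_u-x_v)>0$; if instead $x_v\ge\max\{x_u,x_w\}$, replace both $N(u)$ and $N(w)$ by $N(v)$, where the deleted edge $uw$ now works in your favor: the gain is $2\rho x_u(x_v-x_u)+2\rho x_w(x_v-x_w)+2x_ux_w\ge 2x_ux_w>0$. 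In both cases the increase is strict, so applying this to a maximizer $G$ yields an immediate contradiction unless non-adjacency is transitive. This also disposes of two further soft spots in your write-up: the unproved termination claim (``iterating this duplication \dots terminating at a complete multipartite graph'' — with only weak inequalities such a process could in principle cycle), and most of the ``rigidity bookkeeping'' you defer at the end, since with strict inequalities every extremal graph is forced to be complete multipartite, and the equality case then reduces to the strictness of your (correct) splitting and balancing steps.
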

 Further, Sudakov, Szabo and Vu \cite{sudakov2005} generalized an asymptotic generalization of Tur\'{a}n¡¯s theorem.  He, Jin and Zhang \cite{He2009} presented an specral Tur\'{a}n theorem in terms of the signless spectral radius of  graphs.  Nikiforov \cite{nikiforov2009a} gave a spectral analogy  for the Erd\H{o}s-Stone-Bollob\'{a}s theorem.
  Recently,  Nikiforov \cite{nikiforov2011} gave an excellent survey on the topic of spectral extremal graph theory and highlighted some connections and analogies  between extremal graph theory and spectral extremal graph theory.
   Moreover, Aouchiche and Hansen \cite{aouchiche2010} presented a lot of conjectures in the spectral graph theory. The relationships between the Laplacian eigenvalues and  clique number have been investigated by many researchers (\cite{guo2011},\cite{Lu2007},\cite{nikiforov2007}, \cite{nikiforov2009b}, \cite{nikiforov2010}, \cite{nikiforov2010-2}, \cite{wilf1986}, etc.).

   In this paper,   we characterize all extremal graphs which have the maximum and minimum the algebraic connectivity among all graphs of order $n$ with the clique number $r$, respectively. The main results are stated as follows:
   
   \begin{theorem}\label{spec-ESS}
   Let $\alpha(n,\mathcal{H})$ be the largest algebraic connectivity of graphs of order $n$ without containing a copy of any graph $H$ in $\mathcal{H}$. Then
   \begin{equation}\label{spec-ess-f}
   \lim_{n\rightarrow\infty}\frac{\alpha(n,\mathcal{H})}{n}=
   1-\frac{1}{\psi(\mathcal{H})},
     \end{equation}
where   $\psi(\mathcal{H})=min\{\ \chi(H)|\ \ H\in\mathcal{H}\ \}-1$.
 \end{theorem}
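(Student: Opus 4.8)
The plan is to prove the equality by a sandwich argument, establishing matching lower and upper bounds for $\alpha(n,\mathcal{H})/n$ as $n\to\infty$. Write $p:=\psi(\mathcal{H})$, so that every $H\in\mathcal{H}$ satisfies $\chi(H)\ge p+1$. The lower bound will come from exhibiting a single good $\mathcal{H}$-free graph, namely the Tur\'an graph $T_{n,p}$, and computing its algebraic connectivity; the upper bound will reduce, through a spectral inequality, to the classical edge-counting Erd\H{o}s--Stone theorem (Theorem~\ref{ESS thm}).

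For the lower bound, first note that $T_{n,p}$ is $\mathcal{H}$-free: it is complete $p$-partite, hence $\chi(T_{n,p})=p$ for $n\ge p$, so every subgraph $F\subseteq T_{n,p}$ has $\chi(F)\le p<p+1\le\chi(H)$ for each $H\in\mathcal{H}$, and therefore no $H\in\mathcal{H}$ embeds in $T_{n,p}$. Next I would compute $\alpha(T_{n,p})$ explicitly from the known Laplacian spectrum of a complete multipartite graph $K_{n_1,\dots,n_p}$, whose nonzero eigenvalues are $n$ (with multiplicity $p-1$) and $n-n_i$ (with multiplicity $n_i-1$, for $1\le i\le p$). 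The smallest of these is $n-\max_i n_i$, so for $T_{n,p}$ we get $\alpha(T_{n,p})=n-\lceil n/p\rceil$. Hence $\alpha(n,\mathcal{H})\ge n-\lceil n/p\rceil$, and dividing by $n$ and letting $n\to\infty$ gives $\liminf_{n\to\infty}\alpha(n,\mathcal{H})/n\ge 1-1/p$.

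For the upper bound, let $G$ be any $\mathcal{H}$-free graph of order $n$. I would use the well-known inequality $\alpha(G)\le\frac{n}{n-1}\delta(G)$, which follows from the Rayleigh quotient with the test vector $e_v-\frac{1}{n}\mathbf 1$ at a vertex $v$ of minimum degree: this vector is orthogonal to $\mathbf 1$ and yields quotient $\frac{n\,d(v)}{n-1}$. Combining this with $\delta(G)\le\frac{2e(G)}{n}$ (minimum degree is at most the average degree) gives the clean bound $\frac{\alpha(G)}{n}\le\frac{e(G)}{n(n-1)/2}$. Since $G$ is $\mathcal{H}$-free, $e(G)\le ex(n,\mathcal{H})$, so $\frac{\alpha(n,\mathcal{H})}{n}\le\frac{ex(n,\mathcal{H})}{n(n-1)/2}$; letting $n\to\infty$ and invoking Theorem~\ref{ESS thm} yields $\limsup_{n\to\infty}\alpha(n,\mathcal{H})/n\le 1-1/p$. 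Together with the lower bound this gives the claimed limit.

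The main obstacle is the upper bound, and specifically the spectral inequality $\frac{\alpha(G)}{n}\le\frac{e(G)}{n(n-1)/2}$: this is the bridge that converts the eigenvalue quantity $\alpha(G)$ into an edge count to which Erd\H{o}s--Stone applies, and getting the constants to line up exactly (rather than merely up to lower-order terms) is what makes the two bounds meet. Everything else is routine: the Tur\'an graph's $\mathcal{H}$-freeness is immediate from chromatic numbers, and its algebraic connectivity is a standard spectral computation. I would also take care of the degenerate cases, namely ensuring $n$ is large enough that $T_{n,p}$ has a part of size at least $2$ so that $\alpha(T_{n,p})=n-\lceil n/p\rceil$ holds, and that $\psi(\mathcal{H})>1$ so that Theorem~\ref{ESS thm} is applicable.
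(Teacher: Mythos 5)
Your proof is correct, and the overall sandwich is the same as the paper's (the lower bound via the Tur\'an graph is identical), but your upper bound goes through a different black box, so the two routes differ in a meaningful way. The paper argues by contradiction: if a non-complete $\mathcal{H}$-free graph $G$ had $\alpha(G)>(1-\frac{1}{r}+c)n$, then Fiedler's inequality $\alpha(G)\le\delta(G)\le\frac{2e(G)}{n}$ forces $e(G)>(1-\frac{1}{r}+c)\frac{n^2}{2}$, and Lemma~\ref{lem2-1} (Bollob\'as--Erd\H{o}s--Simonovits) then produces a complete $(r+1)$-partite subgraph with all part sizes tending to infinity, hence a copy of some $H\in\mathcal{H}$ with $\chi(H)=r+1$, a contradiction. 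You instead chain $\alpha(G)\le\frac{n}{n-1}\delta(G)\le\frac{2e(G)}{n-1}\le\frac{2\,ex(n,\mathcal{H})}{n-1}$ and quote the Erd\H{o}s--Stone limit (Theorem~\ref{ESS thm}) directly. Your version buys two small cleanups: the Rayleigh-quotient inequality $\alpha(G)\le\frac{n}{n-1}\delta(G)$ holds for \emph{every} graph, so no non-completeness hypothesis and no separate treatment of complete graphs is needed; and you bypass a step the paper leaves implicit, namely that the large complete $(r+1)$-partite subgraph really contains a member of $\mathcal{H}$ (true, but it requires observing that a fixed graph $H$ with $\chi(H)=r+1$ embeds into such a subgraph once every part exceeds $|V(H)|$). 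What the paper's route buys is a stronger, structural conclusion --- from a linear-in-$n$ excess in $\alpha(G)$ it extracts an explicit large Tur\'an-type subgraph, the same engine it reuses for the subsequent theorem in Section 2 --- whereas your argument leans only on the numerical limit; since Theorem~\ref{ESS thm} is quoted as known in the paper, that shortcut is perfectly legitimate. Finally, your computation of $\alpha(T_{n,p})=n-\lceil n/p\rceil$ from the complete multipartite Laplacian spectrum agrees with the paper's Lemma~\ref{lem2}, which gets the same value via the complement identity $L(T_{n,r})+L(\overline{T_{n,r}})=nI_n-J$, and both arguments correctly confine themselves to $\psi(\mathcal{H})\ge 2$.
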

               Let $G_1$ and $G_2$ be two disjoint graphs. Then the {\it join}, denoted by $G_1\vee G_2$, of $G_1$ and $G_2$ is obtained from $G_1$ and $G_2$ by adding new edges from each vertex in $G_1$ to every vertex of $G_2$.

   \begin{theorem}\label{largest-cliq}
Let $G$ be a non-complete graph of order $n$ not containing $K_{r+1}$.
 Then
 \begin{equation}\label{spec-tu-1}
\alpha(G)\leq n-\lceil\frac{n}{r}\rceil=\alpha(T_{n,r}),
\end{equation}
where $\lceil a \rceil$ is the least integer no less than $a$.
Moreover, if  $n=kr $ or $n=kr+r-1$, then equality (\ref{spec-tu-1}) holds  if and only if  $G$ is Tur\'{a}n graph $T_{n, r}$.
If $n=kr+t, 0<t<r-1$,  then equality (\ref{spec-tu-1}) holds  if and only if  there exist
   graphs $H_1, \dots, H_t$ of order $k+1$ with no edges and $H$ of order $n-(k+1)t$ not containing $K_{r+1-t}$ such that
 $$G=H_1\vee H_2\dots \vee H_t\vee H$$
 and $\alpha(H)\ge n-(k+1)(t+1)$.

\end{theorem}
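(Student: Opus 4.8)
The plan is to pass to the complement, where both the bound and its equality case become transparent. Writing $\bar G$ for the complement and $\lambda_1(\cdot)$ for the largest Laplacian eigenvalue, the identity $L(G)+L(\bar G)=nI-J$ gives, for non-complete $G$, the eigenvalue correspondence $\alpha(G)=\lambda_{n-1}(G)=n-\lambda_1(\bar G)$. I would combine this with two facts. First, the elementary bound $\lambda_1(H)\ge\Delta(H)+1$ for any $H$ with an edge: if $S$ is the star at a vertex of maximum degree then $L(H)-L(S)$ is positive semidefinite and $\lambda_1(S)=\Delta(H)+1$. Second, a Turán-type degree lemma: a $K_{r+1}$-free graph satisfies $\delta(G)\le n-\lceil n/r\rceil$, which I would prove by greedy clique extension, since the common neighbourhood of any $s$-clique has size at least $n-s(n-\delta)$, so $\delta>n-\lceil n/r\rceil$ yields $n-r(n-\delta)>0$ and builds $K_{r+1}$. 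Granting these, $\alpha(G)=n-\lambda_1(\bar G)\le n-(\Delta(\bar G)+1)=n-(n-\delta(G))=\delta(G)\le n-\lceil n/r\rceil$, while the complete multipartite Laplacian spectrum gives $\alpha(T_{n,r})=n-\lceil n/r\rceil$, establishing (\ref{spec-tu-1}).

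For equality, first observe that $\alpha(G)=n-\lceil n/r\rceil$ forces both inequalities above to be tight: $\delta(G)=n-\lceil n/r\rceil$ and $\lambda_1(\bar G)=\Delta(\bar G)+1$. The decisive structural point is that the components $C_1,\dots,C_p$ of $\bar G$ give a join decomposition $G=\overline{C_1}\vee\cdots\vee\overline{C_p}$, and since the largest Laplacian eigenvalue of a disjoint union is the maximum over components, $\alpha(G)=n-\max_j\lambda_1(C_j)$. Thus equality says exactly that $\max_j\lambda_1(C_j)=\lceil n/r\rceil$ while $\lambda_1(C_j)\le\lceil n/r\rceil$ for all $j$; writing $G_j=\overline{C_j}$ and $n_j=|C_j|$, the latter reads $\alpha(G_j)\ge n_j-\lceil n/r\rceil$.

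The heart is an induction on $r$, the case $r=1$ being trivial. Writing $n=kr+t$, I would first check $\omega(G)=r$ and that $\bar G$ is disconnected (a connected $\bar G$ with $\lambda_1=\Delta+1$ has a dominating vertex, i.e.\ $G$ has an isolated vertex and $\alpha(G)=0$). Each $G_j$ is then $K_{w_j+1}$-free with $w_j=\omega(G_j)<r$ and $\sum_j w_j=r$; applying (\ref{spec-tu-1}) to $G_j$ gives $n_j\le\lceil n/r\rceil\, w_j$, so the deficiencies $\epsilon_j=\lceil n/r\rceil\, w_j-n_j\ge0$ satisfy $\sum_j\epsilon_j=r\lceil n/r\rceil-n$, which is $r-t$ when $t>0$ and $0$ when $t=0$. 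A part with $\epsilon_j=0$ is extremal for clique number $w_j$ with $w_j\mid n_j$, so by the induction hypothesis it is a balanced Turán graph, whose complement is disconnected unless $w_j=1$, forcing $C_j=K_{\lceil n/r\rceil}$. When $t=0$ every $\epsilon_j=0$, so every part is such a block and $G=T_{n,r}$. When $t>0$ (so $\lceil n/r\rceil=k+1$), a counting argument using $\sum\epsilon_j=r-t$ and $\sum w_j=r$ shows at least $t$ components equal $K_{k+1}$: were there fewer, some non-tight part would have $kw_j<n_j<(k+1)w_j$, hence be extremal for its own clique number and, by induction, a proper join, making $C_j=\overline{G_j}$ disconnected and contradicting its being a single component. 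Taking $t$ of these $K_{k+1}$ blocks as $H_1,\dots,H_t$ and the join of the remaining components as $H$ yields the stated decomposition, with $\omega(H)=r-t$ and $\alpha(H)=|H|-\max\lambda_1(C_j)\ge n-(k+1)(t+1)$ automatic; in the boundary case $t=r-1$ one gets $\omega(H)=1$, i.e.\ $H$ edgeless, so $G=T_{n,r}$ once more.

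I expect the main obstacle to be closing the induction through the equality case of $\lambda_1(H)=\Delta(H)+1$: proving that a tight component is a genuine balanced Turán graph (so that connectedness of $C_j$ forces $C_j=K_{\lceil n/r\rceil}$) and that the deficiency count cannot be improved both rely on feeding the divisible ($t=0$) uniqueness back into the general case, which must be arranged so the induction does not become circular. Some care is also needed to justify $\omega(G)=r$ and $p\ge2$ in the small-$n$ regimes where the strict comparison $\lceil n/w\rceil>\lceil n/r\rceil$ for $w<r$ can fail; these degenerate ranges I would dispatch by direct inspection.
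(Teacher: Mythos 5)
Your route is genuinely different from the paper's. The paper never looks at components of the complement: it combines Fiedler's inequalities $\alpha(G)\le\nu(G)\le\delta(G)$ with the Kirkland--Molitierno--Neumann--Shader characterization of graphs with $\alpha=\nu$ to peel off disconnected join factors of order $k+1$ one at a time, and then uses a Tur\'an edge count to show that $t$ of them are edgeless; there is no induction on $r$. You instead read everything off $\bar G$ ($\alpha(G)=n-\lambda_1(\bar G)$, components of $\bar G$ = join factors of $G$) and close the equality case by induction on the clique number. When $\omega(G)=r$ your deficiency count is correct and, in my view, cleaner than the paper's: a factor with $w_j\ge 2$ can have neither $\epsilon_j=0$ (the balanced induction hypothesis makes $C_j=\overline{T_{n_j,w_j}}$ disconnected) nor $0<\epsilon_j<w_j$ (the unbalanced induction hypothesis makes $G_j$ a proper join), so $\epsilon_j\ge w_j$, and then $\sum_j\epsilon_j=r-t$ forces at least $t$ components equal to $K_{k+1}$. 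Two smaller points: the sufficiency direction of the ``if and only if'' is never written down (in your framework it is the same complement computation that gives $\alpha(T_{n,r})$, but it must be said), and the statement that a connected graph with $\lambda_1=\Delta+1$ has a dominating vertex is the Grone--Merris--Sunder equality theorem; your star-comparison argument proves only the inequality, so this step needs a citation or a proof.

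The genuine gap is the step ``$\omega(G)=r$'', which you propose to settle by ``direct inspection'' in ``small-$n$ regimes''. That regime is neither small nor finite. Whenever $n=kr+t$ with $0<t$ and $k+t\le r-1$, one has $\lceil n/(r-1)\rceil=\lceil n/r\rceil=k+1$, so every extremal graph for clique number $r-1$ is also a non-complete $K_{r+1}$-free graph attaining equality in (\ref{spec-tu-1}) with $\omega(G)=r-1<r$. A concrete instance is $r=12$, $n=121$: here $G=T_{121,11}$ has $\alpha(G)=110=n-\lceil n/12\rceil$ but clique number $11$; for each $r$ there are quadratically many such $n$, with graphs of unbounded order, so no finite inspection closes this. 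In exactly these cases your counting collapses: $\sum_j w_j=\omega(G)<r$, so the deficiency budget is $(k+1)\omega(G)-n$, which is strictly less than $r-t$, and the count no longer yields $t$ copies of $K_{k+1}$. The repair stays inside your induction but is a genuine extra argument: equality for $r$ forces $\lceil n/r'\rceil=\lceil n/r\rceil$ for the true clique number $r'=\omega(G)$, hence equality at $r'$; apply the induction hypothesis at $r'<r$ to get a decomposition with $t'$ edgeless parts of order $k+1$ (or $G=T_{n,r'}$ in the balanced subcase), check $t'\ge t$, fold the surplus parts into $H$, and verify $\omega(H)\le r-t$ and $\alpha(H)\ge n-(k+1)(t+1)$ via $\lambda_1(\bar H)\le k+1$. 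Without this conversion, the characterization in the range $0<t<r-1$ --- the main new content of the theorem --- is not proved by your argument.
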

\begin{theorem}\label{least-cliq}
Let $G$ be a connected graph with the clique number $r\geq 2$. Then \begin{equation} \label{least-1}
\alpha(G)\ge \alpha(Ki_{n, r}),
\end{equation}
where  $Ki_{n,r}$ is a kite graph of order $n$ which is obtained by adding a pendant path of length $n-r$ to a vertex of $K_r$. Moreover, equality (\ref{least-1})  holds if and only if $G=Ki_{n,r}$.
\end{theorem}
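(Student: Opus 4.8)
The strategy is a two-stage reduction: first cut $G$ down to a minimal skeleton using the monotonicity of $\alpha$ under edge addition, and then move that skeleton to $Ki_{n,r}$ by grafting operations that do not increase the algebraic connectivity. For the first stage I would use the Courant--Fischer description
\[
\alpha(G)=\min_{\,x\perp \mathbf 1,\ x\neq 0}\ \frac{\sum_{ij\in E(G)}(x_i-x_j)^2}{\sum_i x_i^2},
\]
together with the observation that deleting an edge can only decrease the numerator while leaving $\mathbf 1$ in the kernel, so $\alpha$ is monotone non-decreasing under the addition of edges. Hence a graph minimising $\alpha$ over all connected graphs of order $n$ with clique number $r$ may be taken edge-minimal. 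I would then show that an edge-minimal such graph is precisely a clique $K_r$ with trees hung on its vertices: fixing one copy $Q\cong K_r$, any edge outside $Q$ lying on a cycle could be deleted while keeping $G$ connected and keeping $Q$ intact (so $\omega$ stays $r$), contradicting minimality; thus every non-clique edge is a bridge and contracting $Q$ yields a tree. This confines the problem to the family of ``clique-plus-tree'' graphs, of which $Ki_{n,r}$ is the member whose attached tree is a single path rooted at one clique vertex.

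For the second stage I would reach $Ki_{n,r}$ inside this family by two kinds of grafting moves, each shown to be $\alpha$-non-increasing. The first straightens branching: if the attached tree has a vertex where two branches split off, transfer the shorter branch onto the end of the longer one, producing a longer pendant path. The second concentrates the attachment: if the tree meets two distinct clique vertices, relocate one pendant subtree onto the clique vertex carrying the other, and eventually onto a single clique vertex. Starting from the extremal graph $G^{*}$, if $G^{*}$ is not already a kite then one of these moves applies, and the resulting inequality gives $\alpha(Ki_{n,r})\le\alpha(G^{*})\le\alpha(G)$ for every admissible $G$; the strict forms of the moves then yield the uniqueness statement $G=Ki_{n,r}$. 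The base case $r=2$ is Fiedler's classical theorem that the path $P_n=Ki_{n,2}$ is the unique connected minimiser of $\alpha$, and it also serves as the prototype for the path-straightening estimates.

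The technical heart, and the main obstacle, is proving that these grafting moves really do not increase $\alpha$, with strict inequality away from the kite. A naive ``keep the coordinate values and re-route the edges'' test vector does not suffice: because the Fiedler vector is large with opposite signs at the ends of long branches, keeping values can move differences in the wrong direction. The correct device is to exploit the fact that on a pendant path the Fiedler vector is strictly monotone (a consequence of the eigen-equation $(\beta-d_i)x_i=-\sum_{j\sim i}x_j$ along a degree-two path, or of Fiedler's theory of Perron components at a cut vertex), and to build the comparison vector by continuing this monotone trend onto the lengthened path rather than transplanting values verbatim. For the concentration move I would identify, through the bottleneck-matrix description of $\alpha$ at a clique cut vertex, which clique vertex must receive the subtree, and then compare the relevant Perron values: the long pendant path is the Perron branch, so lengthening it and concentrating mass on it lowers $\alpha$.

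Finally, the most delicate points are the cases where $\alpha(G^{*})$ is a repeated eigenvalue (so the Fiedler vector is not unique) or where the Fiedler vector happens to vanish at an attachment vertex, since there the sign bookkeeping that drives the grafting estimate can degenerate. In those cases I would choose the Fiedler vector adapted to the cut structure of $G^{*}$ and, where the variational comparison only gives equality, fall back on a direct eigenvalue-interlacing comparison between $G^{*}$ and its grafted image to secure the strict inequality needed for uniqueness. I expect this equality analysis, rather than the inequality $\alpha(Ki_{n,r})\le\alpha(G)$ itself, to be where most of the real work lies.
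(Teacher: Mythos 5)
Your overall route coincides with the paper's: reduce to a ``clique-plus-trees'' skeleton using monotonicity of $\alpha$ under edge deletion, straighten the attached trees into pendant paths by grafting (the paper quotes exactly such lemmas of Guo for this), and then concentrate the pendant paths onto a single clique vertex (the paper's new ``switching'' lemma). But the two hard steps are not actually supplied, and one of them, as you describe it, would fail. For the concentration step you assert that ``the long pendant path is the Perron branch, so lengthening it and concentrating mass on it lowers $\alpha$,'' i.e.\ for the graph $G^{(2)}_{k,l}$ ($K_r$ with pendant paths of lengths $k\ge l\ge 1$ at two clique vertices) one has $\alpha(G^{(2)}_{k,l})>\alpha(G^{(2)}_{k+1,l-1})$. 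That directed inequality is precisely the hardest statement in the paper, and even the paper cannot prove it head-on: its Lemma 4.4, established by a delicate two-case explicit test-vector construction (not by Perron-component/bottleneck-matrix theory, which is developed for trees rather than for these clique-plus-path graphs), yields only the undirected bound $\alpha(G^{(2)}_{k,l})>\min\{\alpha(G^{(2)}_{k+1,l-1}),\,\alpha(G^{(2)}_{k-1,l+1})\}$; the directed version is then extracted by an iteration-and-contradiction argument exploiting the symmetry $G^{(2)}_{k,l}=G^{(2)}_{l,k}$ (Corollary 4.5). Naming bottleneck matrices is not yet an argument for this, so the technical heart of the proof is still missing.

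The second, more structural gap is in uniqueness. Your scheme yields strictness only when some grafting move applies to the edge-minimal skeleton. But if $G\neq Ki_{n,r}$ while $G$ contains $Ki_{n,r}$ as a spanning subgraph with clique number still $r$ (extra edges among path vertices, or from the path back to the clique), then an edge-minimal skeleton of $G$ can be the kite itself; no move applies, and edge-deletion monotonicity gives only $\alpha(G)\ge\alpha(Ki_{n,r})$, never the strict inequality. Strictness cannot be recovered from monotonicity alone, since adding an edge can leave $\alpha$ unchanged (e.g.\ $K_{1,3}$ and the paw, a triangle with a pendant vertex, both have $\alpha=1$); your fallback of comparing $G^{*}$ with its grafted image does not apply here because there is no grafted image. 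The paper avoids this trap by running its case analysis on $G$ itself rather than on a minimal skeleton, always retaining a witness that $G$ differs from the kite: two independent clique-to-outside edges (Case 1, ending in $G^{(2)}_{k,l}$ with $k,l\ge 1$, strict by Corollary 4.6), a vertex joined to two clique vertices (Case 2, handled by the separate Lemma 4.7, where deleting one of the two joining edges is shown to strictly decrease the Rayleigh quotient of the Fiedler vector), or a branched attached tree (Case 3, using the strictness condition in Guo's grafting lemma). You would need an argument of this kind for graphs whose skeleton is already the kite before the uniqueness claim is proved.
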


   The rest of this paper is organized as follows. In Section 2, 3 and 4, we present proofs of Theorems~\ref{spec-ESS}, \ref{largest-cliq} and \ref{least-cliq}, respectively.

\section{Proof of Theorem~\ref{spec-ESS}}

In order to prove Theorem~\ref{spec-ESS}, we need
 the following Lemma.
\begin{lemma}\cite{B.B}\label{lem2-1}
Given $c>0$ and an integer $r>1$. Then every graph with $n$ vertices and $\lceil(1-\frac{1}{r}+c)\frac{n^2}{2}\rceil$ edges contains a complete (r+1)-partite graph with each part of size $g(n,r+1,c)$, where $g(n,r+1,c)$ tends to infinite with $n$.
\end{lemma}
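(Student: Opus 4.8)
The plan is to establish this quantitative Erd\H{o}s--Stone statement by induction on $r$, after first trading the edge hypothesis for a minimum-degree hypothesis. First I would run the standard greedy deletion: as long as some vertex has degree below $(1-\frac{1}{r}+\frac{c}{2})$ times the current order, delete it. If this process ran all the way down to a small order it would have destroyed more than $(1-\frac{1}{r}+c)\frac{n^{2}}{2}$ edges, a contradiction; hence it stops at an induced subgraph $G'$ on $m$ vertices with $\delta(G')\ge(1-\frac{1}{r}+\frac{c}{2})m$ and with $m\to\infty$ as $n\to\infty$. It then suffices to locate $K_{r+1}(g)$ in $G'$ for some part size $g=g(n,r+1,c)$ that still tends to infinity with $m$.

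The base of the induction is the bipartite case: when only a blow-up $K_{g,g}$ is required from a graph of positive edge density, the K\H{o}v\'{a}ri--S\'{o}s--Tur\'{a}n theorem supplies it with $g\to\infty$. For the inductive step I would use the identity $1-\frac{1}{r}+\frac{c}{2}=1-\frac{1}{r-1}+\frac{1}{r(r-1)}+\frac{c}{2}$, which shows that the density guaranteed at level $r$ strictly exceeds the threshold needed at level $r-1$. Consequently the induction hypothesis yields a blow-up $K_{r}(s')$ in $G'$ with classes $V_{1},\dots,V_{r}$ of any prescribed size $s'$, once $m$ is large; the idea is to take $s'$ huge compared with the eventual target $g$ and then adjoin an $(r+1)$-st part.

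Set $V=V_{1}\cup\dots\cup V_{r}$ and $U=V(G')\setminus V$, and call $u\in U$ \emph{good} if $|N(u)\cap V_{i}|\ge g$ for every $i$. The decisive observation is that the edges between $U$ and $V_{i}$ should be counted through the degrees of the vertices of $V_{i}$, not of $u$: each $w\in V_{i}$ has at least $\delta(G')-rs'$ neighbours in $U$, so $e(U,V_{i})\ge s'(\delta(G')-rs')$, and this lower bound forces all but at most about $(\frac{1}{r}-\frac{c}{2})m$ vertices of $U$ to have $\ge g$ neighbours in $V_{i}$. Summing over the $r$ classes and using $\delta(G')\ge(1-\frac{1}{r}+\frac{c}{2})m$ leaves at least roughly $\frac{rc}{2}m$ good vertices. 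Each good $u$ contains a $g$-subset of $N(u)\cap V_{i}$ in every class; since there are only $\left(\begin{array}{c}s'\\ g\end{array}\right)^{r}$ possible tuples of such $g$-subsets, pigeonhole yields one tuple $(S_{1},\dots,S_{r})$ shared by a number of good vertices equal to (the number of good vertices) divided by $\left(\begin{array}{c}s'\\ g\end{array}\right)^{r}$. As the former grows with $m$ while the latter is a constant, at least $g$ good vertices share it, and together with $S_{1},\dots,S_{r}$ they span the required $K_{r+1}(g)$.

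The main obstacle is the quantitative balancing hidden in this last step. One must choose the inner class size $s'$ large enough that the factor $\frac{s'}{s'-g}$ is nearly $1$ (so the exceptional sets stay below the good-vertex surplus $\frac{rc}{2}m$), yet small enough that the induction still delivers $K_{r}(s')$ for the available $m$; and one must let the final size $g=g(n,r+1,c)$ grow slowly enough with $m$ that the pigeonhole threshold $g\cdot\left(\begin{array}{c}s'\\ g\end{array}\right)^{r}$ is met. Ensuring that $g\to\infty$ survives all $r$ levels of the recursion --- it degrades from the (essentially logarithmic) growth produced by K\H{o}v\'{a}ri--S\'{o}s--Tur\'{a}n at the base --- is the point demanding the most care, and it is exactly what the conclusion ``$g(n,r+1,c)$ tends to infinity with $n$'' records.
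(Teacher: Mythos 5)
Your proposal is correct, but there is nothing in the paper to compare it against step by step: the paper does not prove Lemma~\ref{lem2-1} at all, it imports it as a known result of Bollob\'{a}s, Erd\H{o}s and Simonovits \cite{B.B}. What you have reconstructed is essentially the standard textbook proof of the Erd\H{o}s--Stone theorem, and all of its essential ingredients are present and correctly deployed: the greedy deletion reducing the edge hypothesis to a minimum-degree hypothesis $\delta(G')\ge(1-\frac{1}{r}+\frac{c}{2})m$ on an induced subgraph of order $m=\Omega(\sqrt{c}\,n)$; the induction on $r$ with the K\H{o}v\'{a}ri--S\'{o}s--Tur\'{a}n theorem supplying the bipartite base; the identity $1-\frac{1}{r}=1-\frac{1}{r-1}+\frac{1}{r(r-1)}$ giving the slack needed to invoke the induction hypothesis; and, in the extension step, the counting of edges between the blow-up $K_r(s')$ and the outside through the degrees of the blow-up's vertices --- your remark that this is the decisive choice is right, since counting through the degrees of an outside vertex $u$ gives no information (the blow-up occupies a vanishing fraction of $G'$, so the minimum degree of $u$ says nothing about $N(u)\cap V_i$). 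The final pigeonhole over tuples of $g$-subsets and the balancing $g\ll s'\ll m$ that you flag explicitly (including the conversion of ``for every fixed $g$ there is $n_0(g)$'' into a single function $g(n,r+1,c)\to\infty$) are routine and close the argument. The trade-off between your route and the paper's is economy versus strength: the paper gets the lemma for free by citation, and the result proved in \cite{B.B} is considerably stronger than what is stated --- it produces parts of size on the order of $\log n$ --- whereas your recursion, whose guaranteed part size degrades at each level from the logarithmic K\H{o}v\'{a}ri--S\'{o}s--Tur\'{a}n base, yields only some unspecified slowly growing $g$. But mere divergence of $g$ is exactly what the lemma asserts and all that the proof of Theorem~\ref{spec-ESS} uses, so your self-contained argument would serve the paper equally well.
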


\begin{lemma}\label{lem2}
Let $T_{n,r}$ be Tur\'{a}n graph.  Then  $\alpha( T_{n, r})=n-\lceil\frac{n}{r}\rceil$.
\end{lemma}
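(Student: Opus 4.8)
The plan is to compute the full Laplacian spectrum of $T_{n,r}$ by passing to its complement. Write $T_{n,r}=K_{n_1,\dots,n_r}$ with part sizes $n_i\in\{\lfloor n/r\rfloor,\lceil n/r\rceil\}$ and $\sum_i n_i=n$; precisely, if $n=qr+t$ with $0\le t<r$, then $t$ parts have size $q+1=\lceil n/r\rceil$ and $r-t$ parts have size $q$. The complement $\overline{T_{n,r}}$ is then the disjoint union $K_{n_1}\cup\cdots\cup K_{n_r}$ of cliques. Since the Laplacian spectrum of a disjoint union is the multiset union of the component spectra, and $L(K_m)$ has eigenvalue $0$ once and $m$ with multiplicity $m-1$, the Laplacian eigenvalues of $\overline{T_{n,r}}$ are $0$ with multiplicity $r$ together with each $n_i$ repeated $n_i-1$ times.

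Next I would transfer this to $T_{n,r}$ using the complement identity $L(G)+L(\overline{G})=nI-J$, where $J$ is the all-ones matrix. Both Laplacians annihilate the all-ones vector $\mathbf 1$, and on the orthogonal complement $\mathbf 1^\perp$ we have $J=0$, so there $L(T_{n,r})+L(\overline{T_{n,r}})=nI$; consequently an eigenvector $v\perp\mathbf 1$ of $L(\overline{T_{n,r}})$ with eigenvalue $\mu$ is an eigenvector of $L(T_{n,r})$ with eigenvalue $n-\mu$. One of the $r$ zero eigenvalues of $\overline{T_{n,r}}$ belongs to $\mathbf 1$; the remaining $r-1$ zeros lie in $\mathbf 1^\perp$ and become the eigenvalue $n$ of $T_{n,r}$ with multiplicity $r-1$, while each $n_i$ (multiplicity $n_i-1$) becomes $n-n_i$. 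Hence the Laplacian spectrum of $T_{n,r}$ consists of $0$ (once), $n$ with multiplicity $r-1$, and $n-n_i$ with multiplicity $n_i-1$ for each $i$. The count $1+(r-1)+\sum_i(n_i-1)=n$ confirms that nothing is lost.

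Finally I would read off the algebraic connectivity as the smallest positive eigenvalue. The nonzero values are $n$ and the numbers $n-n_i$; since every $n_i\ge 1$ we have $n-n_i\le n-1<n$, so the minimum is $n-\max_i n_i=n-\lceil n/r\rceil$. The one point requiring care is that this value must actually occur, i.e. some part must have size at least $2$ so that its multiplicity $n_i-1\ge 1$; this holds precisely when $n>r$, in which case $\lceil n/r\rceil\ge 2$ and the eigenvalue $n-\lceil n/r\rceil$ is genuinely present, giving $\alpha(T_{n,r})=n-\lceil n/r\rceil$. (The degenerate case $n=r$, where $T_{n,r}=K_r$ is complete and $\alpha=r$, falls outside the non-complete setting of the main results.)

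I do not expect a serious obstacle here: the whole argument is an application of the complement relation together with the elementary spectrum of a union of cliques, and the only place to be attentive is the bookkeeping of the single all-ones eigenvector when passing between $G$ and $\overline{G}$. An equivalent route, matching the join notation $\vee$ introduced in the paper, is to realize $T_{n,r}$ as an iterated join of empty graphs and apply the join formula for Laplacian eigenvalues; this yields the same spectrum.
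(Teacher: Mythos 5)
Your proof is correct and takes essentially the same route as the paper: both pass to the complement $\overline{T_{n,r}}$ (a disjoint union of cliques) and use the identity $L(G)+L(\overline{G})=nI-J$ to conclude $\alpha(T_{n,r})=n-\lambda_1\bigl(L(\overline{T_{n,r}})\bigr)=n-\lceil\frac{n}{r}\rceil$. Your write-up is merely more explicit --- you compute the full spectrum and correctly note the degenerate case $n=r$ (where the complement is empty and the formula fails), a point the paper's one-line proof silently glosses over.
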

\begin{proof} Let $ \overline{T_{n,r}}$ be the complement graph of $T_{n, r}$. Since $L(T_{n, r})+L(\overline{T_{n,r}})=nI_n-J$, where $I_n$ is the identity matrix and $J$ is the matrix whose all entries are 1,  we have
 $\alpha(T_{n, r})=n-\lambda_1(\overline{T_{n,r}})=n-\lceil\frac{n}{r}\rceil$.
 Hence the assertion holds.
 \end{proof}
  Now we are ready to prove Theorem~\ref{spec-ESS}.

  {\textbf { Proof }} of Theorem~\ref{spec-ESS}.  Let $ \psi(\mathcal {H})=r\ge 2$.   Then $\chi(H)\ge r+1$ for any $H\in \mathcal{H}$.  Clearly, $T_{n, r}$ is a graph of order $n$ and does not contain a copy of any graph $H$ in $\mathcal{H}$. By Lemma~\ref{lem2},
   $\alpha(T_{n,r})=n-\lceil\frac{n}{r}\rceil$.
%$$\lim_{n\rightarrow\infty}\alpha(T_{n,r})=1-\frac{1}{\psi(\mathcal{H})}.$$
Hence
\begin{equation}\label{th1.3-2}
\underline{\lim}_{n\rightarrow\infty}\frac{\alpha(n,\mathcal{H})}{n}
\ge \lim_{n\rightarrow\infty} \frac{\alpha(T_{n,r})}{n}=
\lim_{n\rightarrow\infty}
\frac{n-\lceil\frac{n}{r}\rceil}{n}=1-\frac{1}{r}=1-\frac{1}{\psi(\mathcal{H})}.
\end{equation}
For any given $c>0$, if $G$ is not complete graph of order $n$ and $\alpha(G)>(1-\frac{1}{r}+c)n$, then  by \cite{fiedler1973},
$$ \frac{2e(G)}{n}\ge \delta(G) \geq \alpha(G).$$
Hence
$$e(G)>(1-\frac{1}{r}+c)\frac{n^2}{2}.$$
By Lemma~\ref{lem2-1}, $G$ contains a complete $(r+1)$-partite graph with size of  each part  $g(n,r+1,c)$, where $g(n,r+1,c)$ tends to infinite with $n$. Therefore

\begin{equation}\label{thm1.3-3}
\overline{\lim_{n\rightarrow\infty}}\ \ \frac{\alpha(n,\mathcal{H})}{n}
\le 1-\frac{1}{r}.
\end{equation}
So  (\ref{th1.3-2}) and (\ref{thm1.3-3})  imply that $$\lim_{n\rightarrow\infty}\frac{\alpha(n,\mathcal{H})}{n}=1-\frac{1}{r}=1-\frac{1}{\psi(\mathcal{H})}.$$
We finish our proof.$\blacksquare$

 \begin{theorem}
Let $k\ge 2, r\ge 3$ be integers and $\varepsilon $ be positive. Then there exists an integer $N$, depending on $\varepsilon$ such that every graph $G$  on $n\ge N$ vertices with at least algebraic connectivity $n-\lceil\frac{n}{r}\rceil+\varepsilon n$ contains a copy of the Tur\'{a}n graph $T_{kr, r}$.
\end{theorem}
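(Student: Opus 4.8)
The plan is to follow the pattern of the proof of Theorem~\ref{spec-ESS}: turn the spectral hypothesis into a lower bound on the number of edges, invoke Lemma~\ref{lem2-1} to extract a complete $(r+1)$-partite subgraph with large parts, and then observe that such a subgraph already contains the Tur\'{a}n graph $T_{kr,r}$. The only genuinely new ingredient is this last embedding step, which is elementary: $T_{kr,r}$ is just the complete $r$-partite graph with all $r$ parts of size $k$, so it sits inside any complete $(r+1)$-partite graph whose parts have size at least $k$.

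First I would dispose of the complete case: if $G=K_n$ and $n\ge kr$, then $G$ contains every graph on at most $n$ vertices, in particular $T_{kr,r}$, and we are done. Hence I may assume $G$ is non-complete, so that Fiedler's inequalities $\alpha(G)\le\delta(G)\le \frac{2e(G)}{n}$ from \cite{fiedler1973} apply. Next I would translate the hypothesis into an edge bound. Since $\lceil n/r\rceil< n/r+1$, the assumption $\alpha(G)\ge n-\lceil n/r\rceil+\varepsilon n$ gives $\alpha(G)>(1-\frac1r+\varepsilon)n-1$, whence $e(G)>(1-\frac1r+\varepsilon)\frac{n^2}{2}-\frac{n}{2}$. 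For all sufficiently large $n$ the linear term is absorbed, so $e(G)>(1-\frac1r+\frac{\varepsilon}{2})\frac{n^2}{2}$.

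Now I would apply Lemma~\ref{lem2-1} with $c=\varepsilon/2$: the graph $G$ contains a complete $(r+1)$-partite subgraph in which every part has size $g(n,r+1,\varepsilon/2)$, and $g(n,r+1,\varepsilon/2)\to\infty$ as $n\to\infty$. Finally I would choose the threshold $N$ (depending on $\varepsilon$, and on the fixed $k,r$) so large that for every $n\ge N$ both the absorption above is valid and $g(n,r+1,\varepsilon/2)\ge k$. For such $n$, selecting $r$ of the $r+1$ parts and exactly $k$ vertices from each of them yields a complete $r$-partite graph with all parts of size $k$, that is, a copy of $T_{kr,r}$. Hence $G$ contains $T_{kr,r}$, as required.

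I do not expect a real obstacle here, since the argument is a bookkeeping refinement of Theorem~\ref{spec-ESS}. The two points that need care are: (i) absorbing the additive constant $-\frac{n}{2}$ and the ceiling into a clean density threshold uniformly in $n$, so that the positive constant $c=\varepsilon/2$ is legitimate in Lemma~\ref{lem2-1}; and (ii) verifying that $N$ can be taken to depend only on $\varepsilon$ (with $k,r$ fixed), which is guaranteed precisely because $g(n,r+1,\varepsilon/2)$ grows without bound with $n$.
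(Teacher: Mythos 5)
Your proof is correct, but it routes through a different key lemma than the paper's. Both arguments share the same skeleton: use Fiedler's inequalities $\alpha(G)\le\delta(G)\le\frac{2e(G)}{n}$ to turn the spectral hypothesis into an edge-density bound, then feed that bound into a black-box extremal theorem. The paper's black box is the Erd\H{o}s--Stone theorem in the Bondy--Murty formulation: with $d=\varepsilon/2$ there is an $N$ such that every graph on $n\ge N$ vertices with at least $e(T_{n,r})+dn^2$ edges contains $T_{kr,r}$; this produces the target graph in one step, with no embedding argument. You instead recycle Lemma~\ref{lem2-1} (Bollob\'{a}s--Erd\H{o}s--Simonovits), which the paper only invokes for Theorem~\ref{spec-ESS}, extracting a complete $(r+1)$-partite subgraph with parts of size $g(n,r+1,\varepsilon/2)\to\infty$ and then selecting $r$ parts and $k$ vertices per part to exhibit $T_{kr,r}$. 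Your route costs the (trivial) embedding observation but buys a stronger conclusion --- for large $n$ you in fact obtain $T_{k(r+1),r+1}$, not merely $T_{kr,r}$ --- and keeps the proof self-contained relative to lemmas already stated in the paper. It is also tighter on two points the paper glosses over: you treat the complete graph $G=K_n$ separately (Fiedler's bound $\alpha(G)\le\delta(G)$ requires $G$ non-complete), and you explicitly absorb the linear error coming from the ceiling into the density constant ($n\ge 2/\varepsilon$ suffices for $c=\varepsilon/2$). The paper's own chain $\frac{n(n-\lceil\frac{n}{r}\rceil+\varepsilon n)}{2}\ge\frac{1}{2}n^2\frac{r-1}{r}+dn^2$ with $d=\varepsilon/2$ silently drops that same term: when $r\nmid n$ the left side falls short by roughly $\frac{n}{2}$, so the chain is literally valid only for $d$ somewhat smaller than $\varepsilon/2$ (e.g.\ $d=\varepsilon/4$ with $n$ large) --- a minor slip that your bookkeeping avoids.
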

\begin{proof}
By Erd\H{o}s-Stone Theorem (pp.318 in \cite{bondy2008}), for $d=\frac{\varepsilon}{2}$, there exists an integer $N$, depending on $k,r,$ and $d$,  such that every graph $H$ on $n\ge N$ vertices with at least $e(T_{n,r})+dn^2$ edges contains a copy of the Tur\'{a}n graph $T_{kr, r}$. Moreover, $\varepsilon\ge\frac{1}{N}$.
Now for any graph $G$ on $n\ge N$ vertices with at least algebraic connectivity $n-\lceil\frac{n}{r}\rceil+\varepsilon n$, we have
$$\frac{2e(G)}{n}\ge \alpha(G)\ge n-\lceil\frac{n}{r}\rceil+\varepsilon n.$$
Hence
$$e(G)\ge \frac{n(n-\lceil\frac{n}{r}\rceil+\varepsilon n)}{2}\ge \frac{1}{2}n^2\frac{r-1}{r}+dn^2\ge e(T_{n,r})+dn^2.$$
Hence $G$ contains a  copy of the Tur\'{a}n graph $T_{kr, r}$.
\end{proof}
\section{Proof of Theorem~\ref{largest-cliq}}

In order to prove Theorem~\ref{largest-cliq}, we first prove several Lemmas.
\begin{lemma}\label{lemma3-1}
Let $G$ be a graph of order $n>r$ which does not contain a complete subgraph $K_{r+1}$. Then
\begin{equation}\label{lemma3-1-1}
\alpha(G)\le n-\lceil\frac{n}{r}\rceil.
\end{equation}
Moreover, if $r| n$, then equality in (\ref{lemma3-1-1}) holds if and only if $G$ is Tur\'{a}n graph $T_{n, r}$.
\end{lemma}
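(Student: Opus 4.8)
The plan is to sandwich $\alpha(G)$ between two elementary quantities and then invoke Tur\'{a}n's theorem for both the bound and the equality case. First I would use Fiedler's inequality \cite{fiedler1973}: since $G$ is $K_{r+1}$-free and $n>r$, the graph $G$ cannot be complete, so $\alpha(G)\le \kappa(G)\le \delta(G)$, where $\kappa$ and $\delta$ are the vertex connectivity and the minimum degree. This is precisely the estimate $\delta(G)\ge\alpha(G)$ already used in the proof of Theorem~\ref{spec-ESS}, so it is available to me for free.

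Next I would bound $\delta(G)$ from above by a simple averaging argument powered by Tur\'{a}n's theorem. Because $G$ contains no $K_{r+1}$, Theorem~\ref{turan theorem} gives $e(G)\le e(T_{n,r})\le (1-\frac1r)\frac{n^2}{2}$, hence the average degree satisfies $\frac{2e(G)}{n}\le n-\frac nr$. Since the minimum degree never exceeds the average degree, $\delta(G)\le n-\frac nr$; and as $\delta(G)$ is an integer, this forces $\delta(G)\le \lfloor n-\frac nr\rfloor = n-\lceil\frac nr\rceil$. Combining the two steps yields $\alpha(G)\le \delta(G)\le n-\lceil\frac nr\rceil$, which is exactly (\ref{lemma3-1-1}). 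The one point I would take care to record explicitly is the integrality passage, namely the identity $\lfloor n-\frac nr\rfloor = n-\lceil\frac nr\rceil$, which is what upgrades the real bound $n-\frac nr$ to the stated ceiling.

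For the equality characterization when $r\mid n$, I note that $\lceil\frac nr\rceil=\frac nr$, so the whole estimate collapses to the chain $\alpha(G)\le \delta(G)\le \frac{2e(G)}{n}\le n-\frac nr$. If $\alpha(G)=n-\frac nr$, then every inequality in this chain must hold with equality; in particular $\frac{2e(G)}{n}=n-\frac nr$, that is, $e(G)=\frac{(r-1)n^2}{2r}=e(T_{n,r})$. Since $G$ is $K_{r+1}$-free and attains the Tur\'{a}n bound, the uniqueness part of Theorem~\ref{turan theorem} forces $G=T_{n,r}$. Conversely, Lemma~\ref{lem2} gives $\alpha(T_{n,r})=n-\frac nr$, so the Tur\'{a}n graph does attain equality, and the two directions together give the claim. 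Note that I do not need to analyze the equality condition of Fiedler's inequality separately: the edge-count equality $e(G)=e(T_{n,r})$ already pins the graph down.

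I expect no serious obstacle in this lemma, precisely because the divisible case is the one where Tur\'{a}n's bound is tight and the extremal graph is unique. The genuine difficulty lies further on, in the non-divisible cases $n=kr+t$ of Theorem~\ref{largest-cliq}: there $e(G)=e(T_{n,r})$ no longer coincides with $(1-\frac1r)\frac{n^2}{2}$, the crude averaging step loses tightness, and the extremal graphs are no longer unique. Those cases will presumably demand a finer, eigenvector-based analysis built on top of this lemma rather than the averaging argument used here.
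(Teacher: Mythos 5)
Your proof is correct and takes essentially the same route as the paper: Fiedler's inequality $\alpha(G)\le\delta(G)$ for non-complete graphs, the average-degree bound $\delta(G)\le \frac{2e(G)}{n}$ combined with Tur\'{a}n's theorem, and integrality to reach $n-\lceil\frac{n}{r}\rceil$, with the equality case settled by forcing $e(G)=e(T_{n,r})$ and invoking Tur\'{a}n uniqueness together with Lemma~\ref{lem2}. The only cosmetic difference is that you use the crude estimate $e(T_{n,r})\le(1-\frac{1}{r})\frac{n^2}{2}$ plus the identity $\lfloor n-\frac{n}{r}\rfloor=n-\lceil\frac{n}{r}\rceil$, whereas the paper works with the exact value $\frac{2e(T_{n,r})}{n}=n-\frac{n}{r}-\frac{(r-t)t}{rn}$.
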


\begin{proof} Since $G$ contains no complete subgraph $K_{r+1}$, by Theorem~\ref{turan theorem}, we have $e(G)\le e(T_{n,r})$. Let $n=kr+t$, $0\le t< r$. If $t=0$, then the minimum degree $\delta(G)$ satisfies
 \begin{equation}\label{lemma3-1-2}
  \delta(G)\le \frac{2e(G)}{n}\le \frac{2e(T_{n,r})}{n}
%=\frac{(n-k-1)(k+1)t+(n-k)k(r-t)}{n}
=
n-\frac{n}{r}- \frac{(r-t)t}{rn}.
\end{equation}
Hence $\delta(G)\le n-\lceil\frac{n}{r}\rceil$ with equality if and only if
$t=0$. Further,
 by \cite{fiedler1973} and $G$ is not complete graph of order $n$,
$$\alpha(G)\le \delta(G)\le n-\lceil\frac{n}{r}\rceil.
$$
Moreover, if $r| n$ and  $\alpha(G)=  n-\lceil\frac{n}{r}\rceil$, then
$\delta(G)=\frac{2e(T_{n,r})}{n}$, which implies $e(G)\ge e(T_{n,r})$. Therefore
$e(G)=e(T_{n,r}).$  By Tur\'{a}n Theorem~\ref{turan theorem}, $G$ must be $T_{n,r}$. Conversely, by Lemma~\ref{lem2}, $\alpha(T_{n,r})=n-\lceil\frac{n}{r}\rceil.$
\end{proof}

\begin{corollary}\label{lower}

Let $G$ be a non-complete graph of order $n$ and $r$ be its clique number. Then
\begin{equation}\label{lower-1}
r\ge\frac{n}{n-\alpha(G)}
\end{equation}
with equality if and only if $G$ is Tur\'{a}n graph $T_{n, r}$ and $r| n$.
\end{corollary}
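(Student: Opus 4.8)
The plan is to read this off from Lemma~\ref{lemma3-1} together with the computation in Lemma~\ref{lem2}, since the corollary is essentially a rearrangement of the bound $\alpha(G)\le n-\lceil n/r\rceil$. Because $r$ is the clique number of $G$, the graph contains a copy of $K_r$ but no copy of $K_{r+1}$, so the hypotheses of Lemma~\ref{lemma3-1} are met and that lemma yields $\alpha(G)\le n-\lceil\frac{n}{r}\rceil$. Using the trivial estimate $\lceil\frac{n}{r}\rceil\ge\frac{n}{r}$, this gives $\alpha(G)\le n-\frac{n}{r}$.

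Next I would rearrange this inequality. Since $G$ is non-complete, one has $\alpha(G)<n$ (indeed $\alpha(G)\le\kappa(G)\le n-2$ by Fiedler's bound \cite{fiedler1973}), so $n-\alpha(G)>0$ and division is legitimate. From $\alpha(G)\le n-\frac{n}{r}$ we obtain $\frac{n}{r}\le n-\alpha(G)$, that is $r(n-\alpha(G))\ge n$, which is precisely the claimed inequality $r\ge\frac{n}{n-\alpha(G)}$.

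For the equality case I would trace the chain of inequalities backwards. The equality $r=\frac{n}{n-\alpha(G)}$ is equivalent to $\alpha(G)=n-\frac{n}{r}$. Combining this with $\alpha(G)\le n-\lceil\frac{n}{r}\rceil\le n-\frac{n}{r}$ forces both of the latter inequalities to be equalities; in particular $\lceil\frac{n}{r}\rceil=\frac{n}{r}$, so $r\mid n$, and $\alpha(G)=n-\lceil\frac{n}{r}\rceil$. The equality clause of Lemma~\ref{lemma3-1}, which is stated precisely under the hypothesis $r\mid n$, then identifies $G$ as the Tur\'{a}n graph $T_{n,r}$. Conversely, if $G=T_{n,r}$ with $r\mid n$, then Lemma~\ref{lem2} gives $\alpha(G)=n-\frac{n}{r}$ and the clique number of $T_{n,r}$ equals $r$, so equality holds.

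There is no substantive obstacle here: the entire content is carried by Lemma~\ref{lemma3-1}, and the corollary is a one-line algebraic inversion. The only points requiring care are verifying $n-\alpha(G)>0$ so that the division is valid, and observing in the equality analysis that $r\mid n$ is itself \emph{forced} by the equality, which is exactly the hypothesis needed before the equality condition of Lemma~\ref{lemma3-1} can be applied.
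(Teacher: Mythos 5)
Your proof is correct and follows essentially the same route as the paper: the paper's one-line proof also reads the inequality off Lemma~\ref{lemma3-1} via $\frac{n}{r}\le\lceil\frac{n}{r}\rceil\le n-\alpha(G)$. Your careful treatment of the equality case (showing $r\mid n$ is forced before invoking the equality clause of Lemma~\ref{lemma3-1}, and the converse via Lemma~\ref{lem2}) is exactly the argument the paper leaves implicit.
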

\begin{proof}
By Lemma~\ref{lemma3-1}, $\frac{n}{r}\le \lceil\frac{n}{r}\rceil\le n-\alpha(G)$, which implies the desired result.
\end{proof}

{\textbf Remark:} by Lemma~\ref{lemma3-1}, the extremal graphs of order $n$ without $K_{r+1}$ which attain the maximal algebraic connectivity is unique if $r|n$.  What about $r\not|\ n$?  In this case, the extremal graphs will become more  complex and difficult to characterize. For example, let $G_1=T_{7,3}$ and $G_2$ be a complete 3-partite graph with the sizes of three partitions being 3,3,1. It is easy to see that $\alpha(G_1)=\alpha(G_2)= n-\lceil\frac{n}{r}\rceil
=4$.  Anyway, we are able to obtain some part characterization after a lemma.

\begin{lemma}(\cite{Kirkland2002})\label{kirk}
Let $G$ be a connected, non-complete, graph of order $n$. The the vertex connectivity $\nu(G)$ is equal to $\alpha(G)$ if and only if $G$ can be the join
$G_1\vee G_2$ of two graphs $G_1$ and $G_2$,
 where  $G_1$ is a disconnected graph of order $n-\nu(G)$ and $G_2$ is
a graph of order $\nu(G)$ with $\alpha(G_2)\ge 2\nu(G)-n$.
\end{lemma}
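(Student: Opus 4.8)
The plan is to combine two standard ingredients. The first is the complement identity already exploited in Lemma~\ref{lem2}: from $L(G)+L(\overline{G})=nI_n-J$ one gets, on the subspace $\mathbf{1}^{\perp}$, that $L(\overline{G})=nI-L(G)$, hence $\lambda_1(\overline{G})=n-\alpha(G)$. The second is the Laplacian spectrum of a join: if $G_1,G_2$ have orders $n_1,n_2$ and Laplacian eigenvalues $0=a_1\le\dots\le a_{n_1}$ and $0=b_1\le\dots\le b_{n_2}$, then the spectrum of $G_1\vee G_2$ is $\{0,\,n\}\cup\{a_i+n_2:2\le i\le n_1\}\cup\{b_j+n_1:2\le j\le n_2\}$. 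Writing $n_2=\nu(G)$ and using that a \emph{disconnected} $G_1$ has $a_2=\alpha(G_1)=0$, this specializes to
\begin{equation}\label{join-alpha}
\alpha(G_1\vee G_2)=\min\{\nu(G),\ \alpha(G_2)+n-\nu(G),\ n\}.
\end{equation}
Since $G$ is connected and non-complete, a minimum vertex cut $S$ with $|S|=\nu(G)$ exists and $0<\nu(G)<n$, so all orders below are positive.

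For the sufficiency direction I would assume $G=G_1\vee G_2$ with $G_1$ disconnected of order $n-\nu(G)$, $|G_2|=\nu(G)$, and $\alpha(G_2)\ge 2\nu(G)-n$. The last inequality is precisely $\alpha(G_2)+n-\nu(G)\ge\nu(G)$, so \eqref{join-alpha} gives $\alpha(G)=\nu(G)$. Conversely, deleting the $\nu(G)$ vertices of $G_2$ leaves the disconnected graph $G_1$, so the vertex connectivity of $G$ is at most $\nu(G)$; combined with Fiedler's bound $\alpha(G)\le\nu(G)$ from \cite{fiedler1973}, both quantities coincide. This settles one implication with no real difficulty.

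The substance is the necessity direction. Assume $\alpha(G)=\nu(G)$ and let $S$ be a minimum cut; grouping the components of $G-S$ into two nonempty parts $A,B$ of sizes $a,b$ with $a+b=n-\nu(G)$, there are no $G$-edges between $A$ and $B$, so $\overline{G}$ contains a copy of $K_{a,b}$ on $A\cup B$. I would write $L(\overline{G})=L_0+L_R$, where $L_0$ is the Laplacian of that $K_{a,b}$ (placed on all $n$ vertices, with $S$ isolated) and $L_R\succeq0$ collects the remaining edges of $\overline{G}$. Here $\lambda_1(L_0)=a+b=n-\nu(G)$, while the complement identity and $\alpha(G)=\nu(G)$ give $\lambda_1(\overline{G})=n-\nu(G)$; thus equality holds in Weyl's inequality $\lambda_1(L_0+L_R)\ge\lambda_1(L_0)$. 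The equality case forces a unit top eigenvector $w$ of $L(\overline{G})$ to satisfy $L_Rw=0$ and $L_0w=(a+b)w$, and since the eigenvalue $a+b$ is simple for $L_0$ (its other eigenvalues being only $0,a,b$), $w$ must be the vector equal to $b$ on $A$, $-a$ on $B$, and $0$ on $S$. Then $L_Rw=0$ reads $\sum_{\{i,j\}\in R}(w_i-w_j)^2=0$, so $w$ is constant along every edge of $\overline{G}$ counted in $L_R$; as $w$ vanishes on $S$ but is nonzero on $A\cup B$, there can be no $\overline{G}$-edge between $S$ and $V\setminus S$. Equivalently, every vertex of $S$ is $G$-adjacent to every vertex of $V\setminus S$, i.e. $G=G_1\vee G_2$ with $G_1=G[V\setminus S]$ (disconnected, order $n-\nu(G)$) and $G_2=G[S]$ (order $\nu(G)$). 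Feeding this decomposition back into \eqref{join-alpha} and using $\alpha(G)=\nu(G)$ forces $\alpha(G_2)+n-\nu(G)\ge\nu(G)$, that is $\alpha(G_2)\ge 2\nu(G)-n$.

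The main obstacle is exactly this Weyl-equality analysis in the necessity direction: one must verify that the top eigenvalue $a+b$ of $L_0$ is simple and that equality in $\lambda_1(L_0+L_R)\ge\lambda_1(L_0)$ forces the top eigenvector to be annihilated by $L_R$, from which the complete-join structure of the cut $S$ is read off. The supporting facts — the complement identity $\lambda_1(\overline{G})=n-\alpha(G)$ and the join spectrum formula \eqref{join-alpha} — are routine and would be recorded first so that both implications reduce to the eigenvector bookkeeping above.
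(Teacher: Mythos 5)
First, a point of comparison: the paper does not prove Lemma~\ref{kirk} at all --- it is quoted as a known result from \cite{Kirkland2002} --- so your proposal has to stand entirely on its own. Your two preliminary facts (the complement identity $\lambda_1(\overline{G})=n-\alpha(G)$ and the join spectrum formula) are correct, and the sufficiency direction is fine. But the necessity direction contains a step that is false as written: you claim that equality in $\lambda_1(L_0+L_R)\ge\lambda_1(L_0)$ forces \emph{a unit top eigenvector $w$ of $L(\overline{G})$} to satisfy $L_Rw=0$ and $L_0w=(a+b)w$, and then you identify $w$ via simplicity of the eigenvalue $a+b$ of $L_0$. Equality of the two top eigenvalues constrains the top eigenvector of $L_0$, not an arbitrary top eigenvector of the sum: the top eigenspace of $L(\overline{G})$ can be strictly larger, and its elements need not interact with $L_0$ at all. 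A counterexample sits inside your own setting: take $G=C_4$ with vertices $1,2,3,4$ in cyclic order, $S=\{1,3\}$ a minimum cut, $A=\{2\}$, $B=\{4\}$, so $a=b=1$ and $\alpha(G)=\nu(G)=2$. Then $\overline{G}$ has exactly the two edges $\{2,4\}$ (your $K_{a,b}$, giving $L_0$) and $\{1,3\}$ (giving $L_R$), and $\lambda_1(L(\overline{G}))=2=\lambda_1(L_0)$. Yet the vector $w$ with $w(1)=1$, $w(3)=-1$, $w(2)=w(4)=0$ is a top eigenvector of $L(\overline{G})$ with $L_Rw=2w\neq0$ and $L_0w=0$. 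So the sentence ``$w$ must be the vector equal to $b$ on $A$, $-a$ on $B$, and $0$ on $S$'' does not follow, and the reading-off of the edges of $R$ collapses with it.

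The repair is one line and runs in the opposite direction: let $w_0$ be the explicit vector equal to $b$ on $A$, $-a$ on $B$, and $0$ on $S$, which is an eigenvector of $L_0$ for $a+b$. Evaluating the Rayleigh quotient of $L(\overline{G})$ at $w_0$ gives $n-\nu(G)=\lambda_1(L(\overline{G}))\ge (a+b)+\frac{w_0^TL_Rw_0}{w_0^Tw_0}=(n-\nu(G))+\frac{w_0^TL_Rw_0}{w_0^Tw_0}$, hence $w_0^TL_Rw_0=\sum_{\{i,j\}\in R}\bigl(w_0(i)-w_0(j)\bigr)^2=0$. This is exactly the edge statement you wanted: since $w_0$ vanishes on $S$ and is nonzero on $A\cup B$, no edge of $\overline{G}$ joins $S$ to $V\setminus S$, so $G=G[V\setminus S]\vee G[S]$, and feeding this into your join formula yields $\alpha(G_2)\ge 2\nu(G)-n$ as you argued. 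Note that with this fix the simplicity of $a+b$ as an eigenvalue of $L_0$ is never needed; all that matters is that $w_0$ attains $\lambda_1(L_0)$ and that $L_R$ is positive semidefinite. With that single correction your proposal is a complete, self-contained proof of the lemma that the paper itself only cites as a black box.
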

 \begin{lemma}\label{lemma3.2}
Let $G$ be a graph of order $n$ not containing $K_{r+1}$.
 If  $n=kr+t,0< t<r$ and $\alpha(G)=n-\lceil\frac{n}{r}\rceil$, then there exist
 graphs $H_1, \dots, H_t$ of order $k+1$ with no edges and $H$ of order $n-(k+1)t$ not containing $K_{r+1-t}$ such that
 $$G=H_1\vee H_2\dots \vee H_t\vee H$$
 and $\alpha(H)\ge n-(k+1)(t+1)$.
  \end{lemma}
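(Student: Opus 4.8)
The plan is to pass to the complement $\overline{G}$ and read the desired join off its component structure. For a graph $F$ write $\omega(F)$, $\beta(F)$, $\Delta(F)$ for its clique number, independence number, and maximum degree, and set $m_C=|V(C)|$. Since $0<t<r$ one has $\lceil n/r\rceil=k+1$, so the hypothesis reads $\alpha(G)=n-(k+1)$. First I would invoke the identity $L(G)+L(\overline{G})=nI_n-J$ used in the proof of Lemma~\ref{lem2} to get $\alpha(G)=n-\lambda_1(\overline{G})$, hence $\lambda_1(\overline{G})=k+1$; combined with $\lambda_1(\overline{G})\ge\Delta(\overline{G})+1$ this gives $\Delta(\overline{G})\le k$, while $K_{r+1}$-freeness gives $\beta(\overline{G})=\omega(G)\le r$. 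The goal then becomes purely structural: to show that $\overline{G}$ has at least $t$ components isomorphic to $K_{k+1}$. Granting this, I would take $t$ of them, let $H$ be the complement of the union of the remaining components, and complement the disjoint-union decomposition of $\overline{G}$ to obtain $G=H_1\vee\cdots\vee H_t\vee H$ with each $H_i=\overline{K_{k+1}}$ edgeless of order $k+1$. The remaining conditions then drop out: $\omega(H)=\beta(\overline H)=\beta(\overline{G})-t\le r-t$, so $H$ omits $K_{r+1-t}$, and $\lambda_1(\overline H)\le\lambda_1(\overline{G})=k+1$ gives $\alpha(H)=|V(H)|-\lambda_1(\overline H)\ge (n-(k+1)t)-(k+1)=n-(k+1)(t+1)$.

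The crux is a per-component estimate: for every connected component $C$ of $\overline{G}$,
\begin{equation}\label{plan-key}
m_C-k\,\beta(C)\le 0\ \ (C\not\cong K_{k+1}),\qquad m_C-k\,\beta(C)=1\ \ (C\cong K_{k+1}).
\end{equation}
The second case is immediate. For the first I would argue by contradiction: if $\beta(C)<m_C/k$ then $\chi(C)\ge m_C/\beta(C)>k$, so $\chi(C)\ge k+1$, while $\Delta(C)\le\Delta(\overline{G})\le k$. Since $\chi(C)\le\Delta(C)+1$ always, Brooks' theorem forces $C$ to be complete or an odd cycle. A complete $C$ with $\Delta(C)\le k$ and $\chi(C)\ge k+1$ is $K_{k+1}$, contrary to $C\not\cong K_{k+1}$; an odd cycle $C_{2s+1}$ with $s\ge2$ has $\chi=3$, forcing $k\le2$, but then $\lambda_1(C_{2s+1})>3=k+1$ contradicts $\lambda_1(C)\le k+1$. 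I expect this step --- producing an \emph{edgeless} block, not merely a disconnected piece --- to be the main obstacle, and Brooks' theorem paired with the Laplacian bound $\lambda_1\ge\Delta+1$ is what I would use to clear it.

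To finish, I would sum (\ref{plan-key}) over all components, using that order and independence number are additive over components:
\begin{equation}\label{plan-sum}
n-k\,\omega(G)=\sum_{C}\bigl(m_C-k\,\beta(C)\bigr)\le s,
\end{equation}
where $s$ counts the components isomorphic to $K_{k+1}$. As $\omega(G)\le r$ and $n=kr+t$, the left side of (\ref{plan-sum}) is at least $n-kr=t$; hence $s\ge t$, which is precisely the structural claim.

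For completeness I would record the route suggested by the preceding lemma. From $\alpha(G)=\nu(G)=\delta(G)=n-(k+1)$ --- obtained by combining Fiedler's $\alpha\le\nu\le\delta$ with Tur\'an's bound $e(G)\le e(T_{n,r})$, which forces $\delta(G)\le 2e(T_{n,r})/n<n-k$ and hence $\delta(G)=n-(k+1)$ --- Lemma~\ref{kirk} yields $G=G_1\vee G_2$ with $G_1$ disconnected of order $k+1$ and $\alpha(G_2)\ge n-2(k+1)$, which sets up an induction on $t$ with $r\mapsto r-1$. The weakness of this route is that Kirkland's lemma supplies only a \emph{disconnected} factor, not an edgeless one (e.g.\ the order-$(k+1)$ factor split off from $\overline{P_3\sqcup K_3\sqcup K_3}$ carries an edge), so one is still forced back to the clique/independence input of (\ref{plan-key}); for that reason I would run the complement argument as the main line.
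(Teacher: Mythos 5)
Your proof is correct, but it runs along a genuinely different line from the paper's. The paper stays on the primal side: from Fiedler's inequalities $\alpha(G)\le\nu(G)\le\delta(G)$ and a Tur\'an degree-sum count it first pins down $\alpha(G)=\nu(G)=\delta(G)=n-k-1$, then iterates Kirkland's characterization (Lemma~\ref{kirk}) to peel off factors $G=H_1\vee\cdots\vee H_{t'}\vee G_{t'}$ in which each $H_i$ is only guaranteed to be \emph{disconnected} of order $k+1$, and finally runs a second Tur\'an-type edge count on $G_{t'}$ to show that at least $t$ of the $H_i$ must in fact be edgeless (otherwise $H_1\vee\cdots\vee H_{t'}$ supplies a clique $K_{2t'-t+1}$ and $G_{t'}$ supplies the rest of a $K_{r+1}$). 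You instead work entirely in the complement: from $\lambda_1(\overline{G})=n-\alpha(G)=k+1$ you get $\Delta(\overline{G})\le k$ and $\beta(\overline{G})=\omega(G)\le r$, and then Brooks' theorem (with the odd-cycle case killed by $\lambda_1(C_{2s+1})>3\ge k+1$) shows that any component $C$ of $\overline{G}$ with $|V(C)|>k\,\beta(C)$ is forced to be $K_{k+1}$; additivity of order and independence number over components then yields at least $t$ such components, and complementing back gives the join with genuinely \emph{edgeless} $H_i$ in one stroke. Your approach buys a cleaner argument: it avoids Kirkland's theorem and the delicate iterative peeling, and it never has to upgrade ``disconnected'' to ``edgeless'' --- which is exactly the weak point of the Kirkland route that you correctly identify at the end (your $\overline{P_3\sqcup K_3\sqcup K_3}$ example shows Kirkland may hand you a factor with an edge, and indeed the paper must patch this with its final Tur\'an count). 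What the paper's route buys is the intermediate identity $\alpha(G)=\nu(G)=\delta(G)$, which has some independent interest, and it stays within the algebraic-connectivity toolkit used elsewhere in the paper. Two cosmetic points in your write-up: before invoking $\lambda_1(\overline{G})\ge\Delta(\overline{G})+1$ you should note $\overline{G}$ has an edge (immediate, since $\lambda_1(\overline{G})=k+1>0$), and in the odd-cycle case ``$3=k+1$'' should read ``$3\ge k+1$'' (only $k\le 2$ is known at that point; equality needs the extra observation $\Delta(C)=2\le k$). Neither affects correctness.
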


  \begin{proof}
Since $G$ is not complete graph, by \cite{fiedler1973}, $\delta(G)\geq \alpha(G)=n-k-1$. Let $U=\{v\in V(G)\ |\ d_G(v)=\delta(G)\}$.
    Since $G$ does not contain a complete subgraph $K_{r+1}$ of order $r+1$, we have
$$(n-|U|)(\delta(G)+1)+|U|\delta(G)\le 2e(G)\le 2e(T_{n, r})\le n(n-k)-t(k+1).$$
Hence $n(n-k-1-\delta(G))\ge t(k+1)-|U|$, which implies that $\delta(G)=n-k-1$ and $ |U|\ge t(k+1)$.  So  $\delta(G)= \alpha(G)=n-\lceil\frac{n}{r}\rceil$.
On the other hand, by \cite{fiedler1973} and $G$ is not complete, $\alpha(G)\le \nu(G)\le \delta(G) $, which implies $\alpha(G)=\nu(G)=n-k-1$. Hence by Lemma~\ref{kirk}, there exists a disconnected graph $H_1$ of order $k+1$ and a graph $G_1$ of order $n-k-1$ such that
 $G=H_1\vee G_1$ and $\alpha(G_1)\ge n-2k-2$. Moreover, $\delta(G_1)=\alpha(G_1)= n-2k-2$, since $|U|\geq t(k+1)$.   By Lemma~\ref{kirk}, there exists a disconnected graph $H_2$ of order $k+1$ and a graph $G_2$ of order $n-2(k+1)$, such that $G=H_1\vee H_2 \vee G_2$, where $\alpha(G_2)\geq n-3(k+1)$ and
 $\delta(G_2)\ge n-3(k+1)$.  By repeated use of the above process and Lemma~\ref{kirk}, we can get $G=H_1\vee H_2\dots \vee H_{t'}\vee G_{t'},$ where $t\le t'$, $\alpha(G_{t'})\geq n-(k+1)(t'+1)$ and $\delta(G_{t'})\geq n-(k+1)t'-k$.  Further, there are at least $t$ graphs among $H_1, H_2, \dots, H_{t'}$  having no  edges. In fact, if there are at least $(t'-t+1)$ graphs among  $H_1, H_2, \dots, H_{t'}$  having an edges, then $H_1\vee H_2\dots\vee H_{t'}$ contains a complete graph $K_{2t'-t+1}$. In addition, $n-(k+1)t'=k(r-t')+t-t'=k(r-2t'+t-1)+k(t'-t+1)+t-t'$. Thus
\begin{eqnarray*}
2e(G_{t'}) &\ge & [n-(k+1)t']\delta(G_{t'})\\
 &\ge  &[n-(k+1)t'][n-(k+1)t'-k]\\
 &>& 2e(T_{n-(k+1)t',\ r-2t'+t-1}).
 \end{eqnarray*}
Hence by Theorem~\ref{turan theorem}, $G_{t'}$ contains a complete graph $K_{r-2t'+t}$, which implies that $G$ contains $K_{r+1}$. This contradicts to the condition of Lemma~\ref{lemma3.2}. Without loss of generality,  $H_{1},H_2, \dots, H_t$ have no edges. Let $H=H_{t+1}\vee\dots H_{t'}\vee G_{t'}$.  Then
$ \alpha(H)\ge n-(k+1)(t+1)$.
 \end{proof}
 \begin{corollary}\label{t=r-1}
 Let  $G$ be a graph of order $n$ no containing $K_{r+1}$.
 If  $n=kr+r-1$ and $\alpha(G)=n-\lceil\frac{n}{r}\rceil$, then $G$ is Tur\'{a}n graph $T_{n, r}$.
 \end{corollary}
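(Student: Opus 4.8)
The plan is to read this off directly from Lemma~\ref{lemma3.2} by specializing to $t = r-1$. First I would record the arithmetic of the hypothesis: writing $n = kr + r - 1 = (k+1)r - 1$, we have $\lceil n/r\rceil = k+1$, so the assumption $\alpha(G) = n - \lceil n/r\rceil$ is precisely the hypothesis of Lemma~\ref{lemma3.2} in the boundary case $t = r-1$ (which satisfies $0 < t < r$ since $r \geq 2$). Thus the structural decomposition supplied by that lemma is already available; the work is to see that at this extreme value of $t$ the decomposition is forced to be rigid.

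Applying Lemma~\ref{lemma3.2} with $t = r-1$ then produces graphs $H_1,\dots,H_{r-1}$ of order $k+1$ with no edges, together with a graph $H$ of order $n - (k+1)(r-1)$ not containing $K_{r+1-t} = K_2$, such that $G = H_1 \vee \cdots \vee H_{r-1} \vee H$. The key observation is that the clique constraint on $H$ becomes vacuous to exploit here: a graph containing no $K_2$ is simply edgeless, so $H$ has no edges. Next I would compute the order of $H$: since $n = (k+1)r - 1$,
\[
n - (k+1)(r-1) = (k+1)r - 1 - (k+1)(r-1) = (k+1) - 1 = k,
\]
so $H$ is the empty graph on $k$ vertices.

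Finally I would assemble the conclusion. Each of $H_1,\dots,H_{r-1}$ and $H$ is an empty graph, and the join of empty graphs is complete multipartite with the vertex sets of the factors as its classes. Hence $G$ is the complete $r$-partite graph with $r-1$ classes of size $k+1$ and one class of size $k$. Since $n = kr + (r-1)$, the Tur\'{a}n graph $T_{n,r}$ has exactly $r-1$ classes of size $\lceil n/r\rceil = k+1$ and one class of size $\lfloor n/r\rfloor = k$, so the part-size distributions coincide and $G = T_{n,r}$, as claimed.

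There is no serious obstacle once Lemma~\ref{lemma3.2} is in hand; the entire content of the corollary is the observation that at the boundary value $t = r-1$ the residual graph $H$ is forbidden to contain even a single edge, which removes the flexibility that made the generic case $0 < t < r-1$ hard to pin down (cf.\ the Remark after Corollary~\ref{lower}). The only point requiring care is the bookkeeping that $H$ has order precisely $k$, since it is this count that forces the last class of the multipartition to have exactly the size demanded by $T_{n,r}$.
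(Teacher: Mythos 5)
Your proposal is correct and takes essentially the same approach as the paper: specialize Lemma~\ref{lemma3.2} to $t=r-1$, observe that $H$ has order $n-(k+1)(r-1)=k$ and must be edgeless, and conclude that the join of $r$ edgeless parts of sizes $k+1,\dots,k+1,k$ is exactly $T_{n,r}$. The only cosmetic difference is that you invoke the lemma's stated property that $H$ contains no $K_{r+1-t}=K_2$, while the paper re-derives the edgelessness of $H$ from the $K_{r+1}$-freeness of $G$; these are the same fact.
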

 \begin{proof}
 By Lemma~\ref{lemma3.2}, there exist  graphs $H_1, \dots, H_{r-1}$ of order $k+1$ having no edges and $H$ of order $n-(k+1)(r-1)=k$ such that $\alpha(H)\ge n-(k+1)r=-1$.
 Since $G$ does not contain $K_{r+1}$, $H$ has no edges (otherwise $H$ contains an edge, and $G$ contains $K_{r+1}$, which is a contradiction). So $G$ is Tur\'{a}n graph $T_{n, r}$.\end{proof}

  Now we are ready to prove Theorem~\ref{largest-cliq}.

 {\bf Proof} of Theorem 1.5.   By Lemmas~\ref{lemma3-1} and \ref{lemma3.2}, and Corollary~\ref{t=r-1}, we only to prove that if $n=kr+t, 0<t<r-1$ and there exist
 empty graphs $H_1, \dots, H_t$ of order $k+1$ and $H$ of order $n-(k+1)t$ not containing $K_{r+1-t}$ such that
 $$G=H_1\vee H_2\dots \vee H_t\vee H$$
 and $\alpha(H)\ge n-(k+1)(t+1)$, then $\alpha(G)=n-\lceil\frac{n}{r}\rceil$.
 In fact, the complement graph $\overline{G}$ of $G$ is disjoint of $\overline{H_1},\dots, \overline{H_t}, \overline{H}$.
  Clearly, the largest Laplacian eigenvalue $\lambda_1(H_i)=k+1$ for $i=1, \dots, t$  and $\lambda_1(H)=n-(k+1)t-\alpha(H)\le k+1$. Hence $\lambda_1(\overline{G})\le k+1$. So $\alpha(G)\ge n-(k+1)=n-\lceil\frac{n}{r}\rceil$. Therefore the assertion holds.$\blacksquare$

\section{Proof of Theorem~\ref{least-cliq}}
In this section, we  investigate some properties of the algebraic connectivity of  connected graphs with given clique number. These properties are used to prove Theorem~\ref{least-cliq}, i.e., characterize all extremal graphs with given  clique number which have the minimal algebraic connectivity.
Before proving the main results in this section, we need to recall some known results.

\begin{lemma}\cite{guo2008}\label{guo1}
Let $G$ be  a graph with two vertices $u$ and $v$  and  two  paths $P : uu_1u_{2}\dots u_k$ and
$Q : vv_1\dots v_l$ of lengths $k, l \ (k, l\geq 1)$. Let $G_{k,l}$ be the graph from $G$ by attached two paths $P, Q$  with $u$ and $v$, respectively. Further, let
\begin{equation}
G'_{k+l}= G_{k,l}- uu_1 + u_1v_l,\ \
G''_{k+l}= G_{k,l}- vv_1 + u_kv_1.
\end{equation}
If $X=(X(v), v\in V(G_{k,l}))^T$ is a Fiedler vector of $G_{k,l }$
 and  $X(u_k)X(v_l) \geq 0$, then
\begin{equation}\label{guo1-1}
\alpha(G_{k,l} ) \geq \min\{ \ \alpha(G'_{k+l}),\ \  \alpha(G''_{k+l})\ \}.
\end{equation}
Moreover,  equality in $(\ref{guo1-1})$  holds if and only if $X(v_k)=X(u_l) = 0$.
\end{lemma}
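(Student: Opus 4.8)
The plan is to work entirely through the variational characterization of the algebraic connectivity,
\[
\alpha(H)=\min\Big\{\frac{\sum_{ab\in E(H)}\big(Y(a)-Y(b)\big)^{2}}{\sum_{w\in V(H)}Y(w)^{2}}\ :\ Y\perp\mathbf{1},\ Y\neq 0\Big\},
\]
and to exploit the fact that $G_{k,l}$, $G'_{k+l}$ and $G''_{k+l}$ all have the same vertex set, so that one vector can be tested in all three graphs. First I would fix a Fiedler vector $X$ of $G_{k,l}$, with $L(G_{k,l})X=\alpha X$ and $\alpha=\alpha(G_{k,l})$, and use $X$ as a trial vector for $G'_{k+l}$ and $G''_{k+l}$. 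Since the right-hand side of (\ref{guo1-1}) is the minimum of the two quantities $\alpha(G'_{k+l})$ and $\alpha(G''_{k+l})$, it is enough to produce one of these two graphs on which the Rayleigh quotient of the trial vector is at most $\alpha$.

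Because $G'_{k+l}=G_{k,l}-uu_{1}+u_{1}v_{l}$ and $G''_{k+l}=G_{k,l}-vv_{1}+u_{k}v_{1}$ each arise from $G_{k,l}$ by deleting one edge and adding one edge, testing $X$ yields
\[
\alpha(G'_{k+l})\le\alpha+\frac{\big(X(u_{1})-X(v_{l})\big)^{2}-\big(X(u)-X(u_{1})\big)^{2}}{X^{T}X},
\]
\[
\alpha(G''_{k+l})\le\alpha+\frac{\big(X(u_{k})-X(v_{1})\big)^{2}-\big(X(v)-X(v_{1})\big)^{2}}{X^{T}X}.
\]
Here the deleted-edge terms simplify: summing $L(G_{k,l})X=\alpha X$ along a pendant path telescopes to $X(u)-X(u_{1})=-\alpha\sum_{i=1}^{k}X(u_{i})$ and $X(v)-X(v_{1})=-\alpha\sum_{j=1}^{l}X(v_{j})$. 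I would also use that the moved path can be grafted in either orientation: reversing the values along it is only a permutation of the coordinates of $X$, hence preserves both $X\perp\mathbf{1}$ and $X^{T}X$, so the added-edge term for $G'_{k+l}$ may be taken as the smaller of $\big(X(u_{1})-X(v_{l})\big)^{2}$ and $\big(X(u_{k})-X(v_{l})\big)^{2}$, and likewise for $G''_{k+l}$. Finally, since $G_{k,l}$ has the pendant vertices $u_{k},v_{l}$, its minimum degree is $1$, so Fiedler's bound \cite{fiedler1973} gives $\alpha\le 1$; this pins down the sign behaviour of $X$ along the two pendant paths through $X(u_{k-1})=(1-\alpha)X(u_{k})$ and its analogues.

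The hard part will be the last step. Using the hypothesis $X(u_{k})X(v_{l})\ge 0$, I may assume after replacing $X$ by $-X$ that $X(u_{k}),X(v_{l})\ge 0$, and after interchanging the two paths (which swaps $G'_{k+l}$ and $G''_{k+l}$, leaving the minimum in (\ref{guo1-1}) unchanged) that $X(u_{k})\ge X(v_{l})\ge 0$. I then must show that for at least one of the two grafts the best-oriented added-edge term is dominated by the matching deleted-edge term $\alpha^{2}\big(\sum_{i}X(u_{i})\big)^{2}$ or $\alpha^{2}\big(\sum_{j}X(v_{j})\big)^{2}$, which makes that graph's Rayleigh quotient $\le\alpha$ and proves (\ref{guo1-1}). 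I expect this to be the main obstacle, because neither the direct nor the reversed attachment succeeds universally, so the same-sign condition on the endpoints must be combined with the sign control furnished by $\alpha\le 1$ in a short case analysis to single out the right graph and orientation. For the equality assertion I would read the chain of inequalities backwards: equality throughout forces the chosen added-edge term to vanish and forces $X$ to remain a Fiedler vector of the winning graph, and the eigenvalue equations at the two pendant vertices then allow this only when $X(u_{k})=X(v_{l})=0$.
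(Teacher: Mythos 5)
First, a point of reference: the paper never proves this lemma at all --- it is imported verbatim from \cite{guo2008} and used as a black box --- so your attempt has to stand on its own, and it does not. Your set-up is fine (all three graphs share one vertex set, so the Fiedler vector $X$ of $G_{k,l}$ is an admissible test vector for $G'_{k+l}$ and $G''_{k+l}$; the edge-swap formulas, the telescoping identity, the path-reversal trick and the bound $\alpha\le 1$ are all correct). But the entire content of the lemma is the step you defer with ``I then must show\dots'' and ``I expect this to be the main obstacle'': as written, this is a plan with the decisive step missing, not a proof.

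Worse, that step cannot be completed inside your framework, because there are instances satisfying $X(u_k)X(v_l)\ge 0$ in which \emph{all four} of your candidate bounds (two graphs, two orientations of the moved path) fail simultaneously. Take $G$ to be the tree consisting of a vertex $c$, a pendant path of length $k+1$ at $c$, and two further neighbours $u,v$ of $c$; then $G_{k,l}$ is the spider with legs of lengths $k+1$, $k+1$, $l+1$. For $l<k$ its algebraic connectivity is the Dirichlet eigenvalue $2\bigl(1-\cos\frac{\pi}{2k+3}\bigr)$ of a leg of length $k+1$, the Fiedler vector is unique up to scale, and (after fixing signs) it vanishes identically on the whole $v$-leg, is strictly negative on the other long leg, and is strictly increasing and positive along the $u$-leg, so that $0<X(u)<X(u_1)<\dots<X(u_k)$ and $X(u_k)X(v_l)=0\ge 0$. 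Now compare terms. For $G'_{k+l}$ the removed term is $(X(u)-X(u_1))^2$, while the added term is $X(u_1)^2$ (direct) or $X(u_k)^2$ (reversed); both strictly exceed $(X(u_1)-X(u))^2$. For $G''_{k+l}$ the removed term is $(X(v)-X(v_1))^2=0$, while either added term equals $X(u_k)^2>0$. So every Rayleigh-quotient bound you can form from $X$ and its path permutations is strictly worse than $\alpha(G_{k,l})$ --- yet the lemma's conclusion holds here with strict inequality: $G'_{k+l}\cong G''_{k+l}$ is the spider with legs $k+l+1$, $k+1$, $1$, and a test vector supported on its two longest legs (each carrying its own Dirichlet--Perron profile, scaled to be orthogonal to $\mathbf{1}$) shows its algebraic connectivity lies strictly below $2\bigl(1-\cos\frac{\pi}{2k+3}\bigr)$. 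The missing idea is precisely this kind of \emph{transplanted} test vector: on the moved path one must install the continuation of the eigenvector profile of the receiving path (not a permutation of the old values) and then shift by a constant to restore orthogonality to $\mathbf{1}$ --- the same construction this paper carries out explicitly in the proof of Lemma~\ref{switch} --- or else argue via Perron components and bottleneck matrices as in \cite{guo2008}. (Separately, your equality discussion inherits the same defect, and note the statement's equality condition should read $X(u_k)=X(v_l)=0$.)
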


%In order to present the following results, we need some notations.

\begin{lemma}\cite{guo2010}\label{guo2}
 Let G be a connected graph with  at least two  vertices  and  two  paths $P : uu_1u_2\dots u_k$ and
$Q : uv_1... v_l$ of lengths $k, l\ (k, l\ge 1) $. If $G_{k,l}$  is the graph  obtained from $G$ by attached two paths $P, Q$ at vertex $u$ and
 $G_{k+1, l-1}^{(1)}=G_{k, l}-v_{l-1}v_l+u_kv_l$ (see Fig. $1$) and
$k\ge l\ge 1$, then
\begin{equation}\label{guo2-1}
\alpha(G_{k,l} )\geq \alpha(G_{k+1,l-1}^{(1)} ).
\end{equation}
Moreover, inequality (\ref{guo2-1}) is strict if either $X(v_1)\neq 0$ or $X(u_1)\neq 0$.\\

\begin{picture}(50, 28)
\begin{picture}(50, 28)

\put(30,10){\circle{200}}
\put(28,10){$G$}
\put(40,10){$ u$}
\put(37,10){\circle*{2}}
\put(45,18){\circle*{2}}
\put(55,18){\circle*{2}}
\put(45,20){$ u_1$}
\put(55,20){$ u_k$}

\put(55,2){\circle*{2}}
\put(45,2){\circle*{2}}
\put(45,2){........}
\put(37,10){\line(1,-1){8}}
\put(45,4){$ v_1$}
\put(55,4){$ v_l$}

\put(45,18){........}
\put(37,10){\line(1,1){8}}

\put(28,-8){$G_{k,l}$}
\end{picture}

\begin{picture}(50, 28)

\put(30,10){\circle{200}}
\put(28,10){$G$}
\put(40,10){$ u$}
\put(37,10){\circle*{2}}
\put(45,18){\circle*{2}}
\put(55,18){\circle*{2}}
\put(63,18){\circle*{2}}
\put(45,20){$ u_1$}
\put(55,20){$ u_k$}
\put(63,20){$ v_l$}
\put(55,18){\line(1,0){8}}

\put(55,2){\circle*{2}}
\put(45,2){\circle*{2}}
\put(45,2){........}
\put(37,10){\line(1,-1){8}}
\put(45,4){$ v_1$}
\put(55,4){$ v_{l-1}$}

\put(45,18){........}
\put(37,10){\line(1,1){8}}
\put(28,-8){$G_{k+1,l-1}$}
\put(10,-13){Fig. $1$}
\end{picture}
\end{picture}\\ \\ \\
\end{lemma}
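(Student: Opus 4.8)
The plan is to bound $\alpha(G_{k+1,l-1}^{(1)})$ from above by feeding a single, explicitly modified Fiedler vector of $G_{k,l}$ into the Rayleigh characterization. Write $\alpha=\alpha(G_{k,l})$, let $X$ be a Fiedler vector of $G_{k,l}$, and recall the mean-centered form $\alpha(H)=\min_{z\ \mathrm{nonconstant}} z^{T}L(H)z\,/\,(z^{T}z-(\mathbf 1^{T}z)^2/n)$, so that any nonconstant $z$ yields an upper bound for $\alpha(H)$. Since $G_{k+1,l-1}^{(1)}$ arises from $G_{k,l}$ by deleting $v_{l-1}v_l$ and inserting $u_kv_l$, the two Laplacians differ only on these two edges, so the two quadratic forms are easy to compare on closely related vectors; the aim is to find $z$ with Rayleigh quotient $\le\alpha$, which gives $\alpha(G^{(1)}_{k+1,l-1})\le\alpha(G_{k,l})$.

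First I would set $z$ equal to $X$ at every vertex except the relocated leaf, putting $z(v_l):=X(u_k)$. Then the new edge $u_kv_l$ contributes zero Dirichlet energy while deletion of $v_{l-1}v_l$ removes the term $\Delta^2$, where $\Delta:=X(v_{l-1})-X(v_l)$; hence $z^{T}L(G^{(1)}_{k+1,l-1})z=\alpha\|X\|^2-\Delta^2$, $z^{T}z=\|X\|^2-X(v_l)^2+X(u_k)^2$, and $\mathbf 1^{T}z=X(u_k)-X(v_l)=:s$. The leaf equation at $v_l$ gives $X(v_{l-1})=(1-\alpha)X(v_l)$, so $\Delta^2=\alpha^2X(v_l)^2$. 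Substituting and clearing denominators, the desired inequality collapses, after cancelling $\alpha$ and $\|X\|^2$, to
$$\frac{s^2}{n}\ \le\ X(u_k)^2-(1-\alpha)X(v_l)^2 .$$
Provided $X(u_k)$ and $X(v_l)$ share a sign with $|X(u_k)|\ge|X(v_l)|$, this closes via the crude chain $s^2/n\le s^2\le X(u_k)^2-X(v_l)^2\le X(u_k)^2-(1-\alpha)X(v_l)^2$, the term $\alpha X(v_l)^2=\Delta^2/\alpha$ being exactly the slack needed.

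The heart of the matter, and the step I expect to be the main obstacle, is therefore the comparison $|X(u_k)|\ge|X(v_l)|$ with matching signs. Writing the eigen-equation along each pendant path as the Chebyshev-type recursion $X(u_{k-j})=X(u_k)f_j$ with $f_0=1$, $f_1=1-\alpha$, $f_{j+1}=(2-\alpha)f_j-f_{j-1}$, evaluation at $u$ gives $X(u)=X(u_k)f_k=X(v_l)f_l$. Since $k\ge l$, and the $f_j$ are positive and strictly decreasing as long as the Fiedler vector does not change sign along the path, one gets $0<f_k\le f_l$ and hence $|X(u_k)|=|X(u)|/f_k\ge|X(u)|/f_l=|X(v_l)|$, both carrying the sign of $X(u)$; when $X(u)=0$ the whole path is zero and the bound is immediate. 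Establishing this monotonicity — equivalently, that $\alpha(G_{k,l})$ is small enough relative to the length $k$ that a Fiedler vector cannot oscillate along a pendant path — is the delicate point, since the property fails for artificially large $\alpha$; the quantitative form of the bound coming from the existence of a leaf ($\alpha\le\delta=1$, sharpened by the path length) must be invoked here, and this is where I would expect to cite or prove the pendant-path monotonicity of Fiedler vectors.

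Finally, for the strictness clause I would analyze the equality case. If $\alpha(G^{(1)}_{k+1,l-1})=\alpha(G_{k,l})$ then the recentred vector $z'=z-(s/n)\mathbf 1$ must be a Fiedler vector of $G^{(1)}_{k+1,l-1}$. Testing its eigen-equation at the relocated leaf $v_l$ gives $(L(G^{(1)}_{k+1,l-1})z')(v_l)=z(v_l)-z(u_k)=0$, which forces $\alpha\,z'(v_l)=0$, i.e. $X(u_k)=s/n$; combined with $|X(v_l)|\le|X(u_k)|$ and $n\ge 3$ this yields $X(u_k)=0$, hence $X(u)=0$ and $X(u_1)=X(v_1)=0$. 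Contrapositively, if $X(u_1)\ne0$ or $X(v_1)\ne0$ then $z'$ is not a Fiedler vector of $G^{(1)}_{k+1,l-1}$, so the first inequality in $\alpha(G^{(1)}_{k+1,l-1})\le R(z')\le\alpha(G_{k,l})$ is strict, giving the claimed strict inequality.
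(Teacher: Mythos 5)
The paper does not actually prove this lemma --- it quotes it from \cite{guo2010} --- so your attempt must stand on its own, and unfortunately it does not close. The crux of your argument is the claim that a Fiedler vector $X$ of $G_{k,l}$ has $X(u_k)$ and $X(v_l)$ of the same sign with $|X(u_k)|\ge |X(v_l)|$, which you hope to deduce from positivity and monotonicity of the transfer coefficients $f_j$ along the pendant paths. You correctly flag this as the delicate point, but the hoped-for ``pendant-path monotonicity'' is not a theorem: a Fiedler vector may change sign exactly once along the longer pendant path, and this is perfectly compatible with Fiedler's nodal theorem (the nonnegative set is then exactly the outer segment of that path, which is connected, and the nonpositive set is everything else). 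Indeed this happens in the most generic examples. Take $G=K_2$ with $V(G)=\{u,w\}$, $k=2$, $l=1$, so $G_{2,1}$ is the $5$-vertex tree formed by paths of lengths $2,1,1$ sharing the end $u$. Here $\alpha$ is the smallest positive root of $\lambda^3-7\lambda^2+13\lambda-5=0$, i.e.\ $\alpha\approx 0.519$, and the (essentially unique) Fiedler vector is $X(u_2)=1$, $X(u_1)=1-\alpha\approx 0.481$, $X(u)=\alpha^2-3\alpha+1\approx -0.288$, $X(w)=X(v_1)\approx -0.598$: the signs of $X(u_k)$ and $X(v_l)$ are opposite. Your fallback ``when $X(u)=0$ the whole path is zero'' is also false: for $G=K_m$, $k=l=1$, one has $\alpha=1$ with multiplicity two, $f_1=1-\alpha=0$, and the vector equal to $1$ and $-1$ on the two pendant vertices and $0$ elsewhere is a Fiedler vector with $X(u)=0$ but nonzero path entries, so the division by $f_k$ is illegal there.

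The defect is moreover not repairable within your framework, because the final inequality your single test vector needs, namely $s^2/n\le X(u_k)^2-(1-\alpha)X(v_l)^2$, is itself false in general. Take $G=K_2$, $k=4$, $l=3$ (so $n=9$, the tree with legs $4,3,1$ at $u$). Then $\alpha\approx 0.151$, and the Fiedler vector normalized by $X(u_4)=1$ has $X(u_3)\approx 0.849$, $X(u_2)\approx 0.571$, $X(u_1)\approx 0.206$, $X(u)\approx -0.189$, $X(v_1)\approx -0.523$, $X(v_2)\approx -0.779$, $X(v_3)\approx -0.917$, $X(w)\approx -0.223$. Here $s^2/n\approx 0.41$ while $X(u_k)^2-(1-\alpha)X(v_l)^2\approx 0.29$, so the centered Rayleigh quotient of your $z$ exceeds $\alpha(G_{k,l})$ (one gets $R(z)\approx 0.156>0.151$) and certifies nothing --- even though the lemma's conclusion is true in this example, since the tree with legs $5,2,1$ has algebraic connectivity strictly between $2(1-\cos\frac{\pi}{9})\approx 0.121$ and $0.151$. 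The lesson is that any correct proof must split into cases according to the sign pattern of the Fiedler vector and use different perturbations in the different cases (this is exactly the structure of the companion grafting result, Lemma~\ref{guo1}, and of Guo's proof in \cite{guo2010}); a single sign-blind substitution $z(v_l):=X(u_k)$ cannot work precisely in the typical situation where the Fiedler vector crosses zero on the longer pendant path.
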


%The following results show that $K(n,r)$ is the graph with minimal algebraic connectivity in the connected graph with clique number $r$.For the case $r=3$,Guo\cite{guo2008} has given the proof.we want to prove for all the cases here. Before the proof of the main theorem,we need some lemmas.

\begin{lemma}\label{sign}
Let $G_{k,l}^{(2)}$ be a graph obtained from the complete graph $K_r, r\ge 3$ with vertex set
$V(K_r)=\{w_1, \dots, w_{r-2}, u, v\}$ by attached two paths $P=uu_1\dots u_k$ and  $Q=vv_1\dots v_l$ $(k, l\ge 1)$ at vertices $u$ and $v$, respectively (see Fig.2). If
$X=(X(w), w\in V(G_{k,l}^{(2)}))^T$ is a Fiedler vector, then $X(w_1)=\dots=X(w_{r-2})^T$ and $X(u_k)X(v_l)<0$. Moreover, if $X(w_1)\ge 0, X(u_k)>0$ and $k\ge 1$, then $X(u_1)>X(u)>X(w_1).$
\center{
\vskip 2cm
\setlength{\unitlength}{0.90mm}
\begin{picture}(20, 20)
\put(14,14){\circle{200}}
\put(10,10){$K_{r-2}$}

\put(-9,25){\circle*{2}}
\put(0,25){\circle*{2}}
\put(5,25){\circle*{2}}
\put(-9,27){$u_k$}
\put(0,27){$u_1$}
\put(5,27){$u$}
\put(-7,25){...}
\put(0,25){\line(1,0){5}}
\put(5,25){\line(1,0){18}}
\put(5,25){\line(1,-2){6}}
\put(5,25){\line(1,-1){12}}

\put(38,25){\circle*{2}}
\put(28,25){\circle*{2}}
\put(23,25){\circle*{2}}
\put(38,27){$v_l$}
\put(28,27){$v_1$}
\put(23,27){$v$}
\put(31,25){...}
\put(23,25){\line(1,0){5}}
\put(23,25){\line(-1,-2){6}}
\put(23,25){\line(-1,-1){12}}
\put(12,0){$G_{k,l}^{(2)}$}
\put(-2,0){Fig.2}
\end{picture}}
\end{lemma}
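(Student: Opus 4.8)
The plan is to extract everything from the Laplacian eigen-equation $LX=\alpha X$ with $\alpha=\alpha(G^{(2)}_{k,l})$, written at each vertex $w$ as $(\deg w-\alpha)X(w)=\sum_{w'\sim w}X(w')$, and to combine it with two structural facts about a Fiedler vector of a connected graph: that $0<\alpha<1$, and that each of the level sets $\{w:X(w)\ge 0\}$ and $\{w:X(w)\le 0\}$ induces a connected subgraph (Fiedler's nodal theorem, \cite{fiedler1973}). First I would record the degrees: every $w_i$ has degree $r-1$, while $\deg u=\deg v=r$, the interior path vertices have degree $2$, and the two path-ends have degree $1$.

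Writing the eigen-equation at $w_i$ and putting $S=\sum_j X(w_j)$ gives $(r-\alpha)X(w_i)=S+X(u)+X(v)$ for every $i$; since $\alpha\le\delta(G^{(2)}_{k,l})=1<r$ the factor $r-\alpha$ is positive, so the $i$-independence of the right-hand side forces $X(w_1)=\cdots=X(w_{r-2})=:w$, and substituting back yields the single identity $(2-\alpha)w=X(u)+X(v)$. The same elimination at $u$ (using this identity to remove $X(v)$) produces the key relation
$$X(u_1)-X(u)=(r-\alpha)\big(X(u)-w\big),$$
together with its $v$-analogue; and summing the interior path equations with the endpoint equation telescopes to $X(u_1)-X(u)=\alpha\sum_{j=1}^{k}X(u_j)$.

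For the sign statements I would first check that both ends are nonzero: if $X(u_k)=0$, the endpoint and interior equations force all $X(u_j)=0$, whence the relations above give $w=0$ and $X(v)=0$ and then the whole $v$-path vanishes, so $X\equiv0$, a contradiction. Next, to get $X(u_k)X(v_l)<0$ I argue by contradiction. If both ends were positive, then $u_k,v_l\in\{X\ge0\}$; as this set is connected and every $u_k$–$v_l$ path in $G^{(2)}_{k,l}$ must run through $u_{k-1},\dots,u,v,\dots,v_{l-1}$, all the path vertices together with $u,v$ are forced nonnegative, so $X(u)+X(v)\ge0$. Since $\sum_w X(w)=0$ and $X\neq0$, some clique vertex is negative, i.e. $w<0$, whence $(2-\alpha)w<0$ contradicts $(2-\alpha)w=X(u)+X(v)\ge0$. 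Replacing $X$ by $-X$ rules out two negative ends, leaving $X(u_k)X(v_l)<0$. Finally, assuming $w\ge0$ and $X(u_k)>0$: now the clique vertices and $u_k$ all lie in the connected set $\{X\ge0\}$, so the whole $u$-path and $u$ are nonnegative; the telescoped identity then gives $X(u_1)-X(u)=\alpha\sum_j X(u_j)\ge\alpha X(u_k)>0$, and inserting this into the key relation gives $X(u)-w>0$. Hence $X(u_1)>X(u)>w=X(w_1)$.

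The one genuinely nontrivial input is the strict bound $\alpha<1$, which I need so that $1-\alpha>0$ keeps the endpoint equation sign-preserving and the graph non-degenerate. I would obtain it from $\alpha\le\nu(G^{(2)}_{k,l})=1$ together with Lemma~\ref{kirk}: equality $\alpha=\nu=1$ would force $G^{(2)}_{k,l}=G_1\vee G_2$ with $|G_2|=1$, i.e. a universal vertex, which this graph plainly lacks. The main obstacle, and the step I would scrutinize most, is the use of the connectivity of $\{X\ge0\}$ to upgrade "a path-end is nonnegative" into "the entire pendant path is nonnegative"; once that is secured, everything else is bookkeeping with the eigen-equation and the two derived identities.
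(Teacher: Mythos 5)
Your proof is correct and follows essentially the same route as the paper's: the eigen-equations at the clique vertices force $X(w_1)=\cdots=X(w_{r-2})$, Fiedler's connectivity theorem for the sign sets $\{X\ge 0\}$ and $\{X\le 0\}$ rules out same-sign path ends via the identity $(2-\alpha)X(w_1)=X(u)+X(v)$, and the relation $(r-\alpha)\bigl(X(u)-X(w_1)\bigr)=X(u_1)-X(u)$ yields the final chain of inequalities. The only differences are cosmetic: your telescoping identity $X(u_1)-X(u)=\alpha\sum_{j}X(u_j)$ replaces the paper's induction down the pendant path, and the strict bound $\alpha<1$ you single out as the nontrivial input is never actually needed --- the weak bound $\alpha\le\delta=1$ together with $\alpha>0$ from connectedness suffices, exactly as in the paper.
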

\begin{proof}
Since $k, l\ge 1, $
by \cite{fiedler1973}, $\alpha(G_{k,l}^{(2)})\le 1$.
By the equations of $L(G_{k,l}^{(2)})X=\alpha(G_{k,l}^{(2)})X$ corresponding to vertices $w_i$ and $w_j$, we have
$$\alpha(G_{k,l}^{(2)})X(w_i)-\alpha(G_{k,l}^{(2)})X(w_j)=r(X(w_i)-X(w_j)).$$
Hence $X(w_i)=X(w_j)$ for $1\le i\neq j\le r-2$.

 Supposed that $X(u_k)X(v_l)\ge 0$.  Assume that
 $X(u_k)\ge 0, X(v_l) \ge 0$ (or $X(u_k)\le 0, X(v_l)\le 0$),  let  $U=\{w \ | \ x(w)\ge 0 \ w\in V(G_{k,l}^{(2)})\ \}$ (or let $U=\{w \ |\ x(w)\le 0 \ w\in V(G_{k,l}^{(2)})\ \}$). Then
by Theorem~3.3 in \cite{fiedler1975},  the induced subgraph $G_{k, l}^{(2)}[U]$ by the vertex set $U$ is connected.  Hence $u_k, \dots, u_1, u, v, v_1, \dots, v_l\in U$. So
$X(u_i)\ge 0, X(v_j)\ge 0,$ $ i=1, \dots, k,, j=1, \dots, l$ and $ X(u)\ge 0, X(v)=0$.  By the equation $L(G_{k, l}^{(2)})X=\alpha(G_{k, l}^{(2)})X$, $\alpha(X(w_1))=2X(w_1)-X(u)-X(v)$, which implies $X(w_1)\ge 0$. Hence $X$ is  nonnegative vector, which contradicts to $X\neq 0$ being orthogonal to the vectors of all 1, since $X$ is a Fiedler vector. Hence $X(u_k)X(v_l)<0$ holds.

Moreover, if  $X(w_1)\ge 0, X(u_k)>0$, then  $w_1, u_k\in U_1=:\{\ w\in V(G)\ | \ X(w)\ge 0\ \}$. By Theorem~3.3 in \cite{fiedler1975}, the induced subgraph of $G_{k, l}^{(2)}$
by $U_1$ is connected. Hence $X(u_i)\ge 0$ for $i=1, \dots, k$ and $X(u)\ge 0$.
By $L(G_{k, l}^{(2)})X=\alpha(G_{k, l}^{(2)})X$,  we have $\alpha(G_{k, l}^{(2)})X(u_k)=X(u_k)-X(u_{k-1}))>0$, which implies
$X(u_{k-1})-X(u_{k-2})>0$ by $\alpha(G_{k, l}^{(2)})X(u_{k-1})=(X(u_{k-1})-X(u_k)) +(X(u_{k-1})-X(u_{k-2}))$.  By the induction method, we are able to prove that
$X(u_i)-X(u_{i-1})>0$ for $i=2, \dots, k$. By $\alpha(G_{k, l}^{(2)}) X(u_1)=(X(u_1)-X(u_2))+(X(u_1)-X(u))$, we have $X(u_1)-X(u)>0$.
By $\alpha(G_{k, l}^{(2)})X(u)= rX(u)-(r-2)X(w_1)-X(u_1)-X(v)$ and $\alpha(G_{k, l}^{(2)})X(w_1)=2X(w_1)-X(u)-X(v)$, we have
$(r-\alpha(G_{k, l}^{(2)}))(X(u)-X(w_1))=X(u_1)-X(u)>0,$  which implies that $X(u)>X(w_1)$.
\end{proof}

\begin{lemma}\label{switch}
Let $G_{k, l}^{(2)}$ be the graph obtained from a complete graph $K_r, r\ge 3$ with vertex set
$V(K_r)=\{w_1, \dots, w_{r-2}, u, v\}$ by attached two paths $P=uu_1\dots u_k$ and  $Q=vv_1\dots v_l$ at vertices $u$ and $v$, respectively (see Fig. 3).
If $G_{k+1,l-1}^{(2)}=G-\{uw_i,1\leq i\leq r-2\}+\{v_1w_i,1\leq i\leq r-2\}$,
$G_{k-1,l+1}^{(2)}=G-\{vw_i,1\leq i\leq r-2\}+\{u_1w_i,1\leq i\leq r-2\}$.
 Then
\begin{equation}\label{switch-1}
\alpha(G_{k, l}^{(2)})>\min \{\ \alpha(G_{k+1,l-1}^{(2)}),\ \alpha(G_{k-1,l+1}^{(2)})\ \}.
\end{equation}
\end{lemma}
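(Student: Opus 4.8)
The plan is to work with a Fiedler vector $X$ of $G_{k,l}^{(2)}$ and the variational characterization $\alpha(H)=\min\{Z^{T}L(H)Z/Z^{T}Z:\ Z\perp \mathbf{1},\ Z\neq 0\}$. By Lemma~\ref{sign}, $X$ takes a common value $a:=X(w_1)=\cdots=X(w_{r-2})$ and satisfies $X(u_k)X(v_l)<0$. Up to the path-swap isomorphism $u\leftrightarrow v,\ u_i\leftrightarrow v_i$ (which maps $G_{k,l}^{(2)}\cong G_{l,k}^{(2)}$ and interchanges $G_{k+1,l-1}^{(2)}$ with $G_{k-1,l+1}^{(2)}$) I may assume $k\ge l$, and after replacing $X$ by $-X$ I may assume $X(u_k)>0>X(v_l)$. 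The three graphs share the same vertex set and the same spine $u_k\cdots u_1\,u\,v\,v_1\cdots v_l$; they differ only in where the block $K_{r-2}$ on $\{w_1,\dots,w_{r-2}\}$ is attached, namely at the consecutive spine pairs $\{u,v\}$, $\{v,v_1\}$ and $\{u_1,u\}$ respectively. Since $k\ge l$, the graph $G_{k+1,l-1}^{(2)}$ is exactly the one in which the \emph{longer} tail is lengthened (its length-$(k+1)$ tail is $v\,u\,u_1\cdots u_k$) and the shorter tail is shortened, so it suffices to prove the single strict inequality $\alpha(G_{k,l}^{(2)})>\alpha(G_{k+1,l-1}^{(2)})$; this already yields $\alpha(G_{k,l}^{(2)})>\min\{\alpha(G_{k+1,l-1}^{(2)}),\alpha(G_{k-1,l+1}^{(2)})\}$.

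First I would record the exact consequences of $L(G_{k,l}^{(2)})X=\alpha X$, where $\alpha:=\alpha(G_{k,l}^{(2)})\le 1$ (because $\delta(G_{k,l}^{(2)})=1$). The equations at the $w_i$, at $u$ and at $v$ give $X(u)+X(v)=(2-\alpha)a$, and, writing $\beta:=r+1-\alpha\ (\ge r\ge 3)$, the tail-launch identities $X(u_1)-a=\beta\,(X(u)-a)$ and $X(v_1)-a=\beta\,(X(v)-a)$. These let one evaluate every relevant energy change in closed form. In particular, using $X$ itself as a trial vector for the shifted graph gives $X^{T}L(G_{k+1,l-1}^{(2)})X-\alpha\,X^{T}X=(r-2)\bigl[\beta^{2}(X(v)-a)^{2}-(X(u)-a)^{2}\bigr]$, and the analogous quantity on the other side is $(r-2)\bigl[\beta^{2}(X(u)-a)^{2}-(X(v)-a)^{2}\bigr]$; their sum equals $(r-2)(\beta^{2}-1)\bigl[(X(u)-a)^{2}+(X(v)-a)^{2}\bigr]\ge 0$. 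Thus the block always prefers to sit at a node of $X$, and $X$ alone is provably insufficient: the trial vector must be modified.

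The decisive step is therefore to construct a re-centred trial vector $Z$ for $G_{k+1,l-1}^{(2)}$. Because lengthening the long tail to $u_k\cdots u_1\,u\,v$ pulls the Fiedler node of the shifted graph onto the heavy block, I would take $Z$ to be constant on $\{w_1,\dots,w_{r-2}\}$ and on the short tail $v_1,\dots,v_l$, and to follow the appropriate discrete path-eigenfunction (a reparametrisation of the entries $X(u),X(u_1),\dots,X(u_k)$) along the lengthened tail; this is the same rearrangement idea underlying Lemmas~\ref{guo1} and \ref{guo2}. Substituting $Z$ into the Rayleigh quotient and using the identities above, together with the fact that along the long tail the entries move monotonically toward the positive endpoint $X(u_k)>0$ (as in the proof of Lemma~\ref{sign}), one shows the quotient is strictly below $\alpha$, which gives the claim. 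When $k=l$ the two shifted graphs are isomorphic, $G_{k+1,l-1}^{(2)}\cong G_{k-1,l+1}^{(2)}$, so a single comparison still suffices.

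I expect the genuine difficulty to lie entirely in this last step. The naive reuse of $X$ demonstrably fails for \emph{every} $k,l$, since the block-slide raises the Dirichlet energy by the nonnegative amount computed above; hence the whole argument hinges on designing the re-centred vector $Z$ and controlling the cross terms it creates at the new attachment $\{v,v_1\}$ sharply enough to force a strict drop in the Rayleigh quotient. Verifying that strictness, and in particular handling the balanced case $k=l$ where there is no unbalanced ``head start'' to exploit, is the crux of the proof.
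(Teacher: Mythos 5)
Your setup and preliminary identities are all correct: the common value $a$ on the block, the relation $X(u)+X(v)=(2-\alpha)a$, the launch identities $X(u_1)-a=\beta\,(X(u)-a)$ and $X(v_1)-a=\beta\,(X(v)-a)$ with $\beta=r+1-\alpha$, and the two energy differences whose sum is $(r-2)(\beta^2-1)\bigl[(X(u)-a)^2+(X(v)-a)^2\bigr]\ge 0$. But the proof stops exactly where the lemma lives. The ``re-centred trial vector $Z$'' is never defined --- ``follow the appropriate discrete path-eigenfunction'' is not a construction --- and there is no orthogonality check, no energy computation, no norm computation, and no proof of strictness; you yourself call this step ``the crux.'' As it stands, this is a plan rather than a proof. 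For comparison, the paper's proof does give the explicit object: with $a\ge 0$ (WLOG) and $X(u_k)>0$, setting $A=X(u_1)-X(u)>0$ (positive by Lemma~\ref{sign}) and $B=X(u)-X(v)$, it takes $Y=X-b$ on the two paths together with $u,v$ and $Y=X+\frac{n-r+2}{r-2}b$ on the block, with $b=\frac{r-2}{n}A$ when $A\le B$ (and a variant with a special value at $u$ and shift $c=\frac{(r-1)A-B}{n}$ when $A>B$), then verifies $Y\perp\mathbf{1}$, that the energy does not increase, and --- crucially --- that $Y^TY>X^TX=1$. Note that in the second subcase the energies are exactly equal, so the strict drop in the Rayleigh quotient comes entirely from the norm inflation $Y^TY>1$, a mechanism your sketch does not anticipate when it speaks of forcing the energy down.

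There is also a directional mismatch that makes your missing step harder than the lemma requires, and arguably unprovable by this technique. By normalizing $k\ge l$ \emph{and} $X(u_k)>0>X(v_l)$, you commit to proving the directed inequality $\alpha(G_{k,l}^{(2)})>\alpha(G_{k+1,l-1}^{(2)})$ (lengthen the long tail) from a Fiedler vector whose positive end is at $u_k$. But the shift construction above works in the \emph{opposite} direction: when $X(u_k)>0$, it certifies a strict decrease for $G_{k-1,l+1}^{(2)}$, i.e., for moving the block toward the positive end of $X$ and \emph{shortening} the $u$-tail. This is precisely why the paper states Lemma~\ref{switch} as a minimum over the two shifts --- the sign pattern of the Fiedler vector, not the comparison of $k$ and $l$, decides which shift improves --- and why the directed statement you are targeting appears only afterwards, as Corollary~\ref{cor4.5}, deduced from the min statement by an iteration-and-contradiction argument (telescoping $\alpha(G_{k,l}^{(2)})>\alpha(G_{k-1,l+1}^{(2)})>\cdots>\alpha(G_{l,k}^{(2)})$ against the isomorphism $G_{k,l}^{(2)}\cong G_{l,k}^{(2)}$), never by a direct trial-vector construction. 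If the positive end of $X$ happens to sit on the long tail, your plan needs a genuinely new construction pushing the block \emph{away} from the positive end, and neither your sketch nor the paper's toolkit supplies one. The repair is simple, though: drop the $k\ge l$ normalization, let the sign of $X$ dictate which of $G_{k\pm1,l\mp1}^{(2)}$ you compare against, and prove only the min statement --- which is all Lemma~\ref{switch} claims.
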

\vskip 1cm
\setlength{\unitlength}{0.85mm}
\begin{picture}(50,28)\label{figswitch}
\begin{picture}(30, 20)
\put(24,14){\circle{200}}
\put(20,10){$K_{r-2}$}

\put(1,25){\circle*{2}}
\put(10,25){\circle*{2}}
\put(15,25){\circle*{2}}
\put(1,27){$u_k$}
\put(10,27){$u_1$}
\put(15,27){$u$}
\put(3,25){...}
\put(10,25){\line(1,0){5}}
\put(15,25){\line(1,0){18}}
\put(15,25){\line(1,-2){6}}
\put(15,25){\line(1,-1){12}}

\put(48,25){\circle*{2}}
\put(38,25){\circle*{2}}
\put(33,25){\circle*{2}}
\put(48,27){$v_l$}
\put(38,27){$v_1$}
\put(33,27){$v$}
\put(41,25){...}
\put(33,25){\line(1,0){5}}
\put(33,25){\line(-1,-2){6}}
\put(33,25){\line(-1,-1){12}}
\put(22,0){$G_{k, l}^{(2)}$}
\end{picture}

\begin{picture}(30, 20)
\put(49,14){\circle{200}}
\put(45,10){$K_{r-2}$}

\put(25,25){\circle*{2}}
\put(35,25){\circle*{2}}
\put(40,25){\circle*{2}}
\put(25,27){$u_k$}
\put(35,27){$u$}
\put(40,27){$v$}
\put(28,25){...}
\put(35,25){\line(1,0){5}}
\put(40,25){\line(1,0){18}}
\put(40,25){\line(1,-2){6}}
\put(40,25){\line(1,-1){12}}

\put(73,25){\circle*{2}}
\put(63,25){\circle*{2}}
\put(58,25){\circle*{2}}
\put(73,27){$v_l$}
\put(63,27){$v_2$}
\put(58,27){$v_1$}
\put(66,25){...}
\put(58,25){\line(1,0){5}}
\put(58,25){\line(-1,-2){6}}
\put(58,25){\line(-1,-1){12}}
\put(45,0){$G_{k+1,\ l-1}$}
\end{picture}

\begin{picture}(30, 20)
\put(79,14){\circle{200}}
\put(75,10){$K_{r-2}$}

\put(55,25){\circle*{2}}
\put(65,25){\circle*{2}}
\put(70,25){\circle*{2}}
\put(55,27){$u_k$}
\put(65,27){$u_2$}
\put(70,27){$u_1$}
\put(58,25){...}
\put(65,25){\line(1,0){5}}
\put(70,25){\line(1,0){18}}
\put(70,25){\line(1,-2){6}}
\put(70,25){\line(1,-1){12}}

\put(103,25){\circle*{2}}
\put(93,25){\circle*{2}}
\put(88,25){\circle*{2}}
\put(103,27){$v_l$}
\put(93,27){$v$}
\put(88,27){$u$}
\put(96,25){...}
\put(88,25){\line(1,0){5}}
\put(88,25){\line(-1,-2){6}}
\put(88,25){\line(-1,-1){12}}
\put(75,0){$G_{k-1,\ l+1}$}
\end{picture}
\put(-20,-8){$Fig.3$}
\end{picture}\\ \\

\begin{proof}
 Let $X$ be a Fiedler vector corresponding to $\alpha(G_{k,l}^{(2)})$.
  By Lemma~\ref{sign},  $X(w_1)=\dots =X(w_{r-2}):=a$.
    Without loss of generality, assume $a\geq 0$ (otherwise consider $-X$).
    By Lemma~\ref{sign},  $X(v_l)X(u_k)<0$.  Hence
     consider the following two cases

     {\bf Case 1:} $X(v_l)<0<X(u_k)$.   Denote $V_3= \{u,v,v_1,v_2,\dots,v_k,u_1,u_2,\dots,u_l\}, A=x(u_1)-x(u)$ and $B=x(u)-x(v)$. Further, we consider the following two subcases.

\textbf{Subcase 1.1:} $A\leq B$. Let
\begin{displaymath}
Y(w) = \left\{ \begin{array}{ll}
X(w)-b, & \textrm{ $w\in V_3$}\\
X(w)+\frac{n-r+2}{r-2}b, & \textrm{$w\in  \{w_1, \dots, w_{r-2}\}$ }
\end{array} \right.,
\end{displaymath}
where $b=\frac{r-2}{n}A>0$ by Lemma~\ref{sign}.  It is easy to see that  $Y$ is orthogonal to the vector whose each component is one.
 By $\alpha(G_{k,l}^{(2)})a =\alpha(G_{k,l}^{(2)})X(w_1)=2a-X(u)-X(v)$, we have
\begin{eqnarray*}
&&Y^TL(G_{k-1,l+1}^{(2)})Y-X^TL(G_{k, l}^{(2)})X
\\
&=& (r-2)\{ [(Y(u_1)-a)^2+(Y(u)-a)^2]-[(X(u)-a)^2+(X(v)-a)^2]\}\\
&=&(r-2)\{ [ (X(u_1)-a-A)^2-(X(u)-a)^2] +[(X(u)-a-A)^2-(X(v)-a)^2]\}\\
&=& (r-2)[0+(B-A)(X(u)+X(v)-2a-A)]\\
&=& (r-2)(B-A)(-\alpha(G_{k, l}^{(2)}) a-A)\le 0.
\end{eqnarray*}
In addition, $\sum_{w\in V(G_{k, l}^{(2)})}X(w)^2=1$ and $\sum_{w\in V(G_{k, l}^{(2)})}X(w)=0$, which implies that $\sum_{w\in V_3}X(w)=-(r-2)X(w_1)=-(r-2)a.$  Hence
 \begin{eqnarray*}
&Y^TY&=\sum_{w\in V_3}(X(w)-b)^2+\sum_{w\notin V_3}(X(w)+\frac{n-r+2}{r-2}b)^2\\
&=&1+2b\sum_{w\notin V_3}X(w)+b^2(k+l+2)+\frac{2(n-r+2)}{r-2}b\sum_{w\notin V_3}X(w)\\
&&+(r-2)(\frac{2(n-r+2)}{r-2}b)^2\\
&=& 1+2abn+(k+l+2)b^2+\frac{4(n-r+2)b^2}{r-2}>1.
 \end{eqnarray*}                                                                                                                                                                       Then\begin{eqnarray*}
 \alpha(G_{k-1,l+1}^{(2)})
 &\le & \min_{Z\neq 0, z_1+\dots+ z_n=0}
 \frac{Z^TL(G_{k-1,l+1}^{(2)})Z}{Z^TZ}\le \frac{Y^TL(G_{k-1,l+1}^{(2)})Y}{Y^TY}\\
 &\le & \frac{X^TL(G_{k, l}^{(2)})X}{Y^TY}< \alpha(G_{k, l}^{(2)}).
 \end{eqnarray*}

 {\bf Subcase 1.2: }$A> B$.  Let
 \begin{displaymath}
Y(w) = \left\{ \begin{array}{ll}
X(u)-c+A-B, & {\rm if }\  w=u\\
X(w)-c, & {\rm if}\  w\in V_3\setminus \{u\},\\
X(w)+\frac{(n-r+2)c}{r-2}-\frac{A-B}{r-2},& \textrm{if \ $w\in \{w_1,...,w_{r-2}\}$}
\end{array} \right.,
\end{displaymath}
where  $c=\frac{(r-1)A-B}{n}$. Then $\sum_{w\in V(G)}Y(w)=0$, $Y(u)-Y(w_1)=X(v)-X(w_1)$, $Y(u_1)-Y(w_1)=X(u)-X(w_1)$, $Y(u)-Y(v)=X(u_1)-X(u)$ and $Y(u)-Y(u_1)=X(u)-X(v).$
Hence
\begin{eqnarray*}
&&Y^TL(G_{k-1,l+1}^{(2)})Y-X^TL(G_{k, l}^{(2)})X\\
&=&(r-2)\{(Y(u_1)-Y(w_1))^2+(Y(u)-Y(w_1))^2\}+ (Y(u)-Y(u_1))^2+(Y(u)-Y(v))^2
\\
 &&- (r-2)\{ (X(u)-X(w_1))^2 +(X(v)-X(w_1))^2\}-(X(u)-X(u_1))^2-(X(u)-X(v))^2\\
 &=& 0.
 \end{eqnarray*}
In addition, $\sum_{w\in V(G_{k, l}^{(2)})}X(w)^2=1$ and $\sum_{w\in V_3}X(w)=-(r-2)a$. Hence
\begin{eqnarray*}
Y^TY&=&\sum_{w\in V_3,w\neq u}(X(w)-c)^2+(X(u)-c+A-B)^2+(r-2)(a+\frac{(n-r+2)c}{r-2}-\frac{A-B}{r-2})^2\\
&=&\sum_{w\in V(G_{k, l}^{(2)})}X(w)^2-2c\sum_{w\in V_3}X(w) +(k+l-1)c^2+2X(u)(A-B)+(-c+A-B)^2\\
&&+ 2a(r-2)[\frac{n-r+2}{r-2}c-\frac{A-B}{r-2}]+(r-2)(\frac{n-r+2}{r-2}c-\frac{A-B}{r-2})^2\\
&=& 1+2(r-2)ac+2a(r-2)[\frac{n-r+2}{r-2}c-\frac{A-B}{r-2}]+2X(u)(A-B)+(k+l-1)c^2\\
&&+ (-c+A-B)^2+(r-2)(\frac{n-r+2}{r-2}c-\frac{A-B}{r-2})^2\\
&=& 1+2a(r-2)A+ 2X(u)(A-B)+(k+l-1)c^2+ (-c+A-B)^2\\
&&+(r-2)(\frac{n-r+2}{r-2}c-\frac{A-B}{r-2})^2
\\
&>& 1,
\end{eqnarray*}
since $A>0$, $A-B>0$ and $ X(u)\ge 0$.
Therefore,
\begin{eqnarray*}
\alpha(G_{k-1,l+1}^{(2)}) &\le& \min_{Z\neq 0, z_1+\dots +z_n=0} \frac{Z^TL(G_{k-1,l+1}^{(2)})Z}{Z^TZ}
\\
&\le &
\frac{Y^TL(G_{k-1, l+1}^{(2)})Y}{Y^TY}=\frac{X^TL(G_{k, l}^{(2)})X}{Y^TY}\\
&<& X^TL(G_{k, l}^{(2)})X=\alpha(G_{k, l}^{(2)}).
\end{eqnarray*}
Hence $\alpha(G_{k-1,l+1}^{(2)})<\alpha(G_{k, l}^{(2)}).$

{\bf Case 2:} $X(u_k)<0<X(v_l)$. By the same method, we are able to prove that
$\alpha(G_{k+1,l-1}^{(2)})<\alpha(G_{k, l}^{(2)}).$ The detail is omitted.
Hence the assertion holds.
\end{proof}
\begin{corollary}\label{cor4.5}
Let $G_{k,l}^{(2)}$ be the graph obtained from a complete graph $K_r, r\ge 3$ with vertex set
$V(K_r)=\{w_1, \dots, w_{r-2}, u, v\}$ by attached two paths $P=uu_1\dots u_k$ and  $Q=vv_1\dots v_l$ at vertices $u$ and $v$, respectively. If $k\geq l\ge 1$, then \begin{equation}\label{cor4.5-1}
\alpha(G_{k, l}^{(2)})>\alpha(G_{k+1,l-1}^{(2)}).\end{equation}
\end{corollary}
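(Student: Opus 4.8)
The plan is to deduce Corollary~\ref{cor4.5} from Lemma~\ref{switch} by combining it with a symmetry of the family $\{G_{k,l}^{(2)}\}$ and an induction on the difference $d=k-l$. The point is that Lemma~\ref{switch} only asserts $\alpha(G_{k,l}^{(2)})>\min\{\alpha(G_{k+1,l-1}^{(2)}),\alpha(G_{k-1,l+1}^{(2)})\}$, so all the work lies in showing that, under the hypothesis $k\ge l$, it is the \emph{first} neighbour $G_{k+1,l-1}^{(2)}$ that realizes the minimum. The structural fact I would record first is the symmetry: in $K_r$ the vertices $u$ and $v$ play identical roles (each is adjacent to every $w_j$ and to the other), so interchanging $u\leftrightarrow v$ and $u_i\leftrightarrow v_i$ while fixing each $w_j$ is a graph isomorphism $G_{a,b}^{(2)}\cong G_{b,a}^{(2)}$; in particular $\alpha(G_{a,b}^{(2)})=\alpha(G_{b,a}^{(2)})$ for all $a,b\ge 0$.

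With this in hand I would induct on $d=k-l\ge 0$. For the base case $d=0$ (so $k=l$), the symmetry gives $\alpha(G_{k+1,l-1}^{(2)})=\alpha(G_{k-1,l+1}^{(2)})$, so the minimum in Lemma~\ref{switch} equals $\alpha(G_{k+1,l-1}^{(2)})$ and the strict inequality is immediate. For $d=1$ (so $k=l+1$) I use that $k-1=l$ and $l+1=k$, whence $G_{k-1,l+1}^{(2)}=G_{l,k}^{(2)}\cong G_{k,l}^{(2)}$ and thus $\alpha(G_{k-1,l+1}^{(2)})=\alpha(G_{k,l}^{(2)})$; if the minimum were attained at this term, Lemma~\ref{switch} would force the absurdity $\alpha(G_{k,l}^{(2)})>\alpha(G_{k,l}^{(2)})$, so the minimum must be $\alpha(G_{k+1,l-1}^{(2)})$ and the claim holds.

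For the inductive step $d\ge 2$ I would apply the induction hypothesis to the \emph{more balanced} pair $(k-1,l+1)$, which satisfies $k-1\ge l+1\ge 1$ and has difference $d-2<d$. This yields $\alpha(G_{k-1,l+1}^{(2)})>\alpha(G_{(k-1)+1,(l+1)-1}^{(2)})=\alpha(G_{k,l}^{(2)})$. Feeding this into Lemma~\ref{switch}, the minimum cannot be $\alpha(G_{k-1,l+1}^{(2)})$, since that would give $\alpha(G_{k,l}^{(2)})>\alpha(G_{k-1,l+1}^{(2)})>\alpha(G_{k,l}^{(2)})$; hence the minimum is $\alpha(G_{k+1,l-1}^{(2)})$ and $\alpha(G_{k,l}^{(2)})>\alpha(G_{k+1,l-1}^{(2)})$, completing the induction.

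The hard part is conceptual rather than computational: Lemma~\ref{switch} on its own does not say which rearrangement decreases the connectivity, and a head-to-head comparison of $G_{k+1,l-1}^{(2)}$ with $G_{k-1,l+1}^{(2)}$ would essentially re-prove the same monotonicity. The key device that avoids this circularity is to bootstrap Lemma~\ref{switch} through the induction, using the symmetry $G_{a,b}^{(2)}\cong G_{b,a}^{(2)}$ to settle the two base cases and the induction hypothesis applied to the balanced neighbour to rule out the wrong branch of the minimum. I expect no obstacles beyond keeping the indices and the direction of the inequalities straight.
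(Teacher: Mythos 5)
Your proof is correct and rests on exactly the same two ingredients as the paper's own proof: Lemma~\ref{switch} together with the symmetry $G_{a,b}^{(2)}\cong G_{b,a}^{(2)}$. The paper packages the argument as a proof by contradiction, assuming $\alpha(G_{k,l}^{(2)})\le\alpha(G_{k+1,l-1}^{(2)})$ and cascading Lemma~\ref{switch} along the chain $(k,l),(k-1,l+1),\dots,(l,k)$ until the symmetry forces $\alpha(G_{k,l}^{(2)})>\alpha(G_{l,k}^{(2)})=\alpha(G_{k,l}^{(2)})$, whereas you run a direct induction on the imbalance $k-l$ with the balanced cases as base; these are the same chain of inequalities traversed in opposite logical directions, so the approaches are essentially identical.
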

\begin{proof}
If $k=l$, the assertion follows from  Lemma~\ref{switch} and $G_{k+1,l-1}^{(2)}=G_{k-1, l+1}^{(2)}$.  If $k>l$, suppose that
$\alpha(G_{k,l}^{(2)})>\alpha(G_{k+1, l-1}^{(2)})$ does not hold, i.e.,  $\alpha(G_{k,l}^{(2)})\le \alpha(G_{k+1,l-1}^{(2)})$. Then
by Lemma \ref{switch}, we have
\begin{equation}\label{cor4.5-2}
\alpha(G_{k,l}^{(2)})>\min \{\alpha(G_{k-1,l+1}^{(2)}),\alpha(G_{k+1,  l-1}^{(2)})\}=\alpha(G_{k-1,l+1}^{(2)}).
\end{equation}
By repeated uses of Lemma~\ref{switch} and (\ref{cor4.5-2}), we have
\begin{equation}\label{cor4.5-3}
\alpha(G_{k-1,l+1}^{(2)})>\min \{\alpha(G_{k-2,l+2}^{(2)}),\alpha(G_{k,l}^{(2)})\}=
\alpha(G_{k-2,l+2}^{(2)}).
\end{equation}
  By repeated uses of Lemma \ref{switch}, we obtain  $$\alpha(G_{k, l}^{(2)})>\alpha(G_{k-1,l+1}^{(2)})>
  \alpha(G_{k-2,l+2}^{(2)})>\dots >\alpha(G_{k-(k-l),l+(k-l)}^{(2)})=\alpha(G_{l,k}^{(2)}),$$
  which is a contradiction, since $G_{k,l}^{(2)}=G_{l,k}^{(2)}$.
  So the assertion holds.
\end{proof}

\begin{corollary}\label{cor4.6} Let
$G_{k,l}^{(2)}$ be a graph of order $n$ obtained from the complete graph $K_r, r\ge 3$ with vertex set
$V(K_r)=\{w_1, \dots, w_{r-2}, u, v\}$ by attached two paths $P=uu_1\dots u_k$ and  $Q=vv_1\dots v_l$ at vertices $u$ and $v$, respectively, where $n=r+k+l$.
If $k>0, l>0$, then
\begin{equation}\label{cor2-1}
\alpha(G_{k, l}^{(2)})>\alpha(Ki_{n, r}).
\end{equation}\end{corollary}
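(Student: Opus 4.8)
The plan is to reduce $G_{k,l}^{(2)}$ to the kite graph by iterating Corollary~\ref{cor4.5}, which transfers one unit of length from the shorter pendant path to the longer one while strictly decreasing the algebraic connectivity. First I would note that $G_{k,l}^{(2)}$ and $G_{l,k}^{(2)}$ are isomorphic, so there is no loss of generality in assuming $k\ge l\ge 1$. Next I would identify the target graph: since $n=r+k+l$, a single pendant path of length $n-r=k+l$ attached at a vertex of $K_r$ is precisely the graph obtained by taking $l'=0$, so that $Ki_{n,r}=G_{k+l,0}^{(2)}$.

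The core of the argument is then the chain of strict inequalities
\[
\alpha(G_{k,l}^{(2)})>\alpha(G_{k+1,l-1}^{(2)})>\cdots>\alpha(G_{k+l-1,1}^{(2)})>\alpha(G_{k+l,0}^{(2)})=\alpha(Ki_{n,r}),
\]
each step of which is a single application of Corollary~\ref{cor4.5}. To license every step I must verify its hypothesis $k'\ge l'\ge 1$ at each stage. Before the step that acts on it, the current pair of path lengths is $(k+j,\,l-j)$ for some $0\le j\le l-1$; the first index dominates the second because $(k+j)-(l-j)=(k-l)+2j\ge 0$ under the assumption $k\ge l$, and the second index satisfies $l-j\ge 1$ exactly when $j\le l-1$. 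Hence Corollary~\ref{cor4.5} applies at each of the $l$ successive steps, the last of which moves $(k+l-1,1)$ to $(k+l,0)$ and so exhausts the shorter path.

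Since every inequality in the chain is strict, transitivity yields $\alpha(G_{k,l}^{(2)})>\alpha(Ki_{n,r})$, which is the desired assertion. I expect no genuine obstacle here: the only points demanding care are the symmetry reduction to the case $k\ge l$ and the bookkeeping that keeps the pair $(k',l')$ inside the range $k'\ge l'\ge 1$ on which Corollary~\ref{cor4.5} is valid, right down to the final step where $l'$ drops to $0$ and the graph becomes the kite $Ki_{n,r}$.
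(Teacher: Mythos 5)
Your proof is correct and is essentially identical to the paper's argument: the paper's entire proof of Corollary~\ref{cor4.6} is the single sentence that the assertion follows from repeated use of Corollary~\ref{cor4.5}. Your write-up just makes explicit what the paper leaves implicit --- the symmetry reduction to $k\ge l$, the identification $Ki_{n,r}=G_{k+l,0}^{(2)}$, and the verification that the pair $(k+j,\,l-j)$ satisfies the hypothesis $k'\ge l'\ge 1$ at each of the $l$ steps of the chain.
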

\begin{proof}
The assertion follows from the repeated use of Corollary~\ref{cor4.5}.\end{proof}

\begin{lemma}\label{switch2}
Let $G$ be a graph of order $n$ obtained from a complete graph $K_r$ with vertex set $V(K_r)=\{w_1, \dots, w_{r-2}, u, v\}$ and a path $P=u_1u_2\dots u_k$ by joining two edges $uu_1$ and $vu_1$, where $n=r+k$.  If $3\le r\le n-1,$ then
\begin{equation}
\alpha(G)>\alpha(Ki_{n, r}).
\end{equation}

\begin{picture}(50, 20)
\put(50,10){$K_r$}
\put(55,10){\circle{200}}
\put(61,14){\circle*{2}}
\put(61,6){\circle*{2}}
\put(56,14){u}
\put(56,6){v}
\put(69,10){\circle*{2}}
\put(70,6){$u_1$}
\put(78,10){\circle*{2}}
\put(78,6){$u_2$}
\put(81,10){$\ldots$}
\put(88,10){\circle*{2}}
\put(88,6){$u_k$}
\put(70,10){\line(1,0){8}}
\put(61,14){\line(2,-1){8}}
\put(61,6){\line(2,1){8}}
\put(70,-8){$G$}
\put(55,-8){Fig.$4$}

\end{picture}\\

\end{lemma}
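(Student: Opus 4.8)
The plan is to realize $Ki_{n,r}$ as $G$ with a single edge deleted and then compare Rayleigh quotients. Observe that $G$ is obtained from $Ki_{n,r}$ by adding exactly the edge $u_1v$; writing $e_x$ for the characteristic vector of a vertex $x$, this means
$$L(G)=L(Ki_{n,r})+(e_{u_1}-e_v)(e_{u_1}-e_v)^T.$$
Let $X$ be a Fiedler vector of $G$ normalized by $X^TX=1$, so $X\perp\mathbf{1}$ and $X^TL(G)X=\alpha(G)$. Since $X\perp\mathbf{1}$, it is admissible in the variational characterization of $\alpha(Ki_{n,r})$, and therefore
$$\alpha(Ki_{n,r})\le X^TL(Ki_{n,r})X=\alpha(G)-(X(u_1)-X(v))^2.$$
This already gives the non-strict bound $\alpha(G)\ge\alpha(Ki_{n,r})$, and it reduces the whole lemma to proving the single fact $X(u_1)\neq X(v)$.

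Next I would record two structural facts about $X$. First, $G$ is connected and non-complete, and $\{u,v\}$ is a vertex cut (deleting it separates $\{w_1,\ldots,w_{r-2}\}$ from $\{u_1,\ldots,u_k\}$), so by Fiedler's inequalities \cite{fiedler1973} we have $\alpha(G)\le\nu(G)\le 2<r$. Second, subtracting the equations of $L(G)X=\alpha(G)X$ at pairs of clique vertices, exactly as in Lemma~\ref{sign}, gives $(\alpha(G)-r)(X(w_i)-X(w_j))=0$ and $(\alpha(G)-r-1)(X(u)-X(v))=0$; since $\alpha(G)<r$, these force $X(w_1)=\cdots=X(w_{r-2})=:a$ and $X(u)=X(v)$ (the latter also reflecting the $u\leftrightarrow v$ automorphism of $G$).

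The remaining and genuinely delicate step is to exclude $X(u_1)=X(v)$. Suppose it holds; together with $X(u)=X(v)$ this gives $X(u)=X(v)=X(u_1)=:c$. Writing out $L(G)X=\alpha(G)X$ at a vertex $w_i$ and at $u$ yields the two scalar equations $\alpha(G)\,a=2a-2c$ and $\alpha(G)\,c=(r-2)(c-a)$; eliminating $c$ collapses these to $\tfrac{1}{2}\alpha(G)\,a\,(r-\alpha(G))=0$, and since $0<\alpha(G)<r$ this forces $a=0$ and hence $c=0$. Feeding $X(u)=X(v)=X(u_1)=0$ into the eigen-equations along the pendant path $u_1u_2\cdots u_k$ then propagates $X(u_2)=\cdots=X(u_k)=0$, so $X\equiv 0$, contradicting that $X$ is a Fiedler vector. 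Hence $X(u_1)\neq X(v)$ and $\alpha(G)>\alpha(Ki_{n,r})$. I expect this last step to be the main obstacle: the inequality $\alpha(G)\ge\alpha(Ki_{n,r})$ is immediate from edge addition, and all the real work lies in showing the Fiedler vector cannot be constant across the attachment vertices, which is precisely where the hypothesis $r\ge 3$ (guaranteeing $\alpha(G)<r$) enters decisively.
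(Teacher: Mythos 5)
Your proof is correct, and its skeleton is the same as the paper's: take a normalized Fiedler vector $X$ of $G$, observe that $Ki_{n,r}$ is $G$ with one of the two edges at $u_1$ deleted, bound $\alpha(Ki_{n,r})$ by the Rayleigh quotient of $X$, and reduce everything to showing $X$ separates the endpoints of the deleted edge, using the eigen-equations to force $X(w_1)=\cdots=X(w_{r-2})$ and $X(u)=X(v)$. Where you genuinely differ is in how that non-degeneracy step is carried out, and your version is more robust than the paper's. The paper solves the eigen-equations explicitly, writing $X(w_i)=\frac{2X(u)}{2-\alpha(G)}$ and $X(u_1)=\frac{\alpha(G)^2-(r+1)\alpha(G)+2}{2-\alpha(G)}X(u)$, and then argues $X(u_1)\neq0$ and $X(u)\neq X(u_1)$; this tacitly requires $\alpha(G)\neq2$ as well as $\alpha(G)\notin\{0,r\}$, none of which is justified there. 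In fact $\alpha(G)=2$ does occur: when $k=1$ (i.e., $r=n-1$) one has $G=K_2\vee(K_{r-2}\cup K_1)$ with $K_2$ on $\{u,v\}$, so $\alpha(G)=\nu(G)=2$ by Lemma~\ref{kirk} --- and $r=n-1$ is precisely a case for which the proof of Theorem~\ref{least-cliq} invokes Lemma~\ref{switch2}. Your contradiction argument (assume $X(u_1)=X(v)$, eliminate $c$ to get $\frac{1}{2}\alpha(G)\,a\,(r-\alpha(G))=0$, hence $a=c=0$, then propagate zeros down the pendant path to conclude $X\equiv0$) involves no division by $2-\alpha(G)$, and you explicitly supply the inequality $0<\alpha(G)\le\nu(G)\le 2<r$ via the cut $\{u,v\}$ and Fiedler's bound, which is what legitimizes the deductions $X(w_i)=X(w_j)$, $X(u)=X(v)$, and the final collapse. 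So your write-up matches the paper's plan but quietly repairs its soft spots; the only stylistic remark is that your Rayleigh-quotient step and the paper's are literally the same computation, just with the roles of $u$ and $v$ exchanged.
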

\begin{proof}
Let $X=(X(w), w\in V(G))^T$ be a Fiedler vector of $G$.
By $L(G)X=\alpha(G)X$, we have
\begin{equation}\label{switch2-1}
\alpha(G)X(w_i)=(r-1)X(w_i)-\sum_{t=1, t\neq i}^{r-2}X(w_t)-X(u)-X(v),
\end{equation}

\begin{equation}\label{switch2-2}
\alpha(G)X(w_j)=(r-1)X(w_j)-\sum_{t=1, t\neq j}^{r-2}X(w_t)-X(u)-X(v),
\end{equation}
\begin{equation}\label{switch2-3}
\alpha(G)X(u)=rX(u)-\sum_{t=1,}^{r-2}X(w_t)-X(u_1)-X(v),
\end{equation}
\begin{equation}\label{switch2-4}
\alpha(G)X(v)=rX(v)-\sum_{t=1,}^{r-2}X(w_t)-X(u_1)-X(u).
\end{equation}
Subtracting (\ref{switch2-1}) from (\ref{switch2-2}) gets
$\alpha(G)(X(w_i)-X(w_j))=r(X(w_i)-X(w_j))$, which implies $X(w_i)=X(w_j)$ for $i,j=1, \dots, r-2$.  On the other hand, subtracting (\ref{switch2-3}) from (\ref{switch2-4}) gets $\alpha(G)(X(u)-X(v))=(r+1)(X(u)-X(v))$, which implies $X(u)=X(v)$.
Further, by (\ref{switch2-1}),
$X(w_i)=\frac{2X(u)}{2-\alpha(G)}.$
Hence by (\ref{switch2-3}), we have
$$\alpha(G)X(u)=(r-1)X(u)-(r-2)X(w_i)-X(u_1),$$
 which implies
 \begin{equation}\label{switch2-6}
 X(u_1)=\frac{\alpha(G)^2-(r+1)\alpha(G)+2}{2-\alpha(G)}X(u).
 \end{equation}
Then $X(u_1)\neq 0$. Otherwise $X(u)=X(v)=X(w_i)=0$, which implies $X(u_2)=\dots =X(u_k)=0$. It is a contradiction.  Further $X(u)\neq X(u_1)$, since  $\alpha(G)^2-(r+1)\alpha(G)+2\neq 2-\alpha(G)$. Note that $ Ki_{n, r}=G-uu_1$.
Hence
$$X^TL(Ki_{n, r})X=X^TL(G-uu_1)X=X^TL(G)X-(X(u)-X(u_1))^2<\alpha(G).$$
Therefore $\alpha(Ki_{n,r})\le X^TL(Ki_{n, r})X<\alpha(G)$.
\end{proof}

 Now we are ready to prove Theorem~\ref{least-cliq}.

 {\textbf {Proof} } of  Theorem~\ref{least-cliq}.
 If $r=2$, then $Ki_{n, r}$ is  a path of order $n$ and the assertion follows from \cite{fiedler1973}. If $r=n-1$, the assertion follows from Lemma~\ref{switch2}. Hence we assume that $3\le r\le n-2$.   Let $ V=\{w_1, \dots, w_r, u_{1}, u_2,\dots, u_{n-r}\}$ be the vertex set of $G$ and the induced subgraph $G[w_1,\dots, w_r]$ is a clique of order $r$.  Assume that $G\neq Ki_{n, r}$.  We consider the following three cases.

 {\bf Case 1:} There exist two vertices, say $u_1, u_2$, in $\{u_1, \dots, u_{n-r}\}$
 and two vertices, say $w_1, w_2$, in $ \{w_1, \dots, w_r\}$  such that $w_1u_1, w_2u_2\in E(G)$.  Since the algebraic connectivity of a graph is nonincreasing function on deleting edges, we can delete as much as possible edges in $G$ excepting the edge set$\{w_1u_1, w_2u_2, w_iw_j, 1\le i\neq j\le r\}$ such that the resulted graph is still connected.  The resulted graph is denoted by $H$. Then
 $\alpha(G)\ge \alpha(H)$.  Without loss of generality, we assume that
 $w_1u_1,\dots, w_tu_t\in E(H)$ and there  are no other edges joining vertex set $\{w_{t+1}, \dots, w_r\}$ and $\{u_1, \dots, u_{n-r}\}$. Then the component $T_i$ of $G$ containing vertex $w_i$ by deleting edges $w_iw_j, j=1, \dots, r, j\neq i$ is a tree for $i=1, \dots, t$.
  Let  $d(w_i, u)=\max\{dist_{T_i}(w_i, v)\ |\  d_{T_i}(v)\ge 3, \ v\in V(T_1)\}$, where $d_{T_i}(v)$ is the degree of $v$ in $T_i$,  $dist_{T_i}(w_i, v)$ is the distance of between $w_i$ and $v$ in $T_i$. By the repeated use of Lemma~\ref{guo2}, we get a new graph $H_1$ such that $d_{T_i}(u)=2$ and $\alpha(H)\ge \alpha(H_1)$.  Further, by a series of the repeated use of Lemma~\ref{guo2}, there exists a graph $H_2$ such that $\alpha(H_1)\ge \alpha(H_2)$ and  the component of $H_2$ containing $w_i$ by deleting  edges $w_iw_j, j=1, \dots, r, j\neq i$ is a path starting $w_i$, $i=1, \dots, t$. Hence there exists a graph $H_3$ is the graph from $K_r$ and attached $t$ paths $P_1, \dots, P_t$ starting vertices $w_1, \dots, w_t$ and end vertices $u_{n-t-r-1}, \dots, u_{n-r}$, respectively.  If $t\ge 3$, then there exist two vertices, say $X(u_{n-r-1})X(u_{n-r})\ge 0$. by Lemma~\ref{guo1}, we get a new graph $H_4=H_3-w_tu_t+u_{n-1}u_t$ such that
  $\alpha(H_3)\ge \alpha(H_4)$. Hence by the repeated use of Lemma~\ref{guo1}, there exists a graph $H_5$ which is the graph obtained from $K_r$ with vertex set $\{w_1, \dots, w_r\}$ by attached a path $P$ of length $k$  at $w_1$ and a path $Q$ of length $l$ at $w_2$ such that
$\alpha(H_4)\ge \alpha(H_5)$ and $k\ge l\ge 1$.  Hence by Corollary~\ref{cor4.6}, $\alpha(G)\ge \alpha(H_5)>\alpha(Ki_{n, r})$.

{\bf Case 2:} There do not exist $1\le i\neq j\le r$ and $1\le p\neq q\le n-r$ such that
$w_iu_p, w_ju_q\in E(G)$ and there exist at least two vertices, say $w_1, w_2$, in $\{w_1, \dots, w_r\}$ and a vertex, say, $u_1$, in $\{u_1, \dots, u_{n-r}\}$ such that
$w_1u_1, w_2u_1\in E(G)$.  We are able to deleting as much as possible edges in $E(G)$ excepting edge set $\{w_iw_j, 1\le i\neq j\le r, w_1u_1, w_2u_1\}$ such that the resulted graph is still connected. The resulted graph is denoted by $H_6$ and $\alpha(G)\ge \alpha(H_6)$. Further, the component of $H_6$ containing $u_1$ from $H_6$ by deleting edges $w_1u_1,w_2u_1$. Then we are able to apply the repeated Lemma~\ref{guo2}, the final resulted graph $H_7$ is the graph of order $n$ from $K_r$  with vertex set $\{w_1, \dots, w_r\}$ and a path $P=u_1\dots u_{n-r}$ by joining two edges $w_1u_1$ and $w_2u_1$. Moreover $\alpha(H_6)\ge \alpha(H_7)$. By Lemma~\ref{switch2}, $\alpha(H_7)>\alpha(Ki_{n,r})$. So $\alpha(G)>\alpha(Ki_{n,r})$.

{\bf Case 3:} There exists only one vertex, say $w_1$, in $\{w_1, \dots, w_r\}$ such that it is adjacent to vertices in $\{u_1,\dots, u_{n-r}\}$, say $w_1u_1\in E(G)$.
 We are able to deleting as much as possible edges in $E(G)$ excepting edge set $\{w_iw_j, 1\le i\neq j\le r, w_1u_1\}$ such that the resulted graph is still connected.
The resulted graph is denoted by $H_8$ and $\alpha(G)\ge \alpha(H_8)$. By the repeated use of Lemma~\ref{guo2}, there exists a graph $H_9$  which is obtained from $K_r$ with vertex set $\{w_1,\dots,w_r\}$ and two paths $P=u_1\dots u_s$ and $Q=u_{s+1}\dots u_{n-r}$ by joining two edges $w_1u_1$ and $u_{s+1}u_i$ (or $u_{s+1}w_1$), $1\le i\le s-1$. It is easy to see that $X(u_s)\neq 0$ or $X(u_{n-r})\neq 0$ (otherwise, we are able to obtain $X(u_i)=0, i=1, \dots, n-r$, which implies $X=0$). By Lemma~\ref{guo2}, we obtain $\alpha(H_9)>\alpha(Ki_{n,r})$. We finish our proof.$\blacksquare$

\begin{corollary}\label{lower and upper}
Let $G$ be a graph of order $n$ with  clique number $r$. Then
$$\frac{n}{n-\alpha(G)}\le r\le n+1-\frac{4}{n\alpha(G)}.$$
\end{corollary}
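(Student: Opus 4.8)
The plan is to treat the two inequalities separately, since each comes from one of the two main theorems. The left-hand inequality $\frac{n}{n-\alpha(G)}\le r$ is nothing but Corollary~\ref{lower} (equivalently it is read off from Lemma~\ref{lemma3-1}, which gives $\frac{n}{r}\le\lceil\frac{n}{r}\rceil\le n-\alpha(G)$); here one tacitly assumes $G$ is connected and non-complete, so that $0<\alpha(G)<n$ and the quantity is meaningful. Hence the real content is the upper bound $r\le n+1-\frac{4}{n\alpha(G)}$.

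For the upper bound I would first rewrite the target inequality in the equivalent form $\alpha(G)\ge \frac{4}{n(n+1-r)}$, so that it suffices to produce a lower bound on $\alpha(G)$ of this precise shape. Theorem~\ref{least-cliq} performs exactly the reduction we need: among all connected graphs of order $n$ with clique number $r$ the kite $Ki_{n,r}$ minimizes the algebraic connectivity, so $\alpha(G)\ge\alpha(Ki_{n,r})$. Thus the problem collapses to the single graph $Ki_{n,r}$, and it remains to show $\alpha(Ki_{n,r})\ge\frac{4}{n(n+1-r)}$.

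The key step is a lower bound for $\alpha(Ki_{n,r})$ in terms of its diameter. The diameter is easy to compute: labelling the pendant path $v_0v_1\cdots v_{n-r}$ with $v_0\in K_r$, the farthest pair consists of the path-end $v_{n-r}$ together with any clique vertex different from $v_0$, at distance $(n-r)+1$, so $D=n+1-r$. I then invoke the classical Mohar-type inequality $\alpha(H)\ge \frac{4}{n\,D(H)}$, valid for any connected graph $H$ of order $n$ with diameter $D(H)$; applied to $H=Ki_{n,r}$ it gives precisely $\alpha(Ki_{n,r})\ge\frac{4}{n(n+1-r)}$, and combining with $\alpha(G)\ge\alpha(Ki_{n,r})$ and rearranging yields $r\le n+1-\frac{4}{n\alpha(G)}$.

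Since this diameter bound is not established in the excerpt, the main obstacle is to prove it with the \emph{sharp} constant $4$; the weaker constant $1$ is routine but would only give $r\le n+1-\frac{1}{n\alpha(G)}$. I would obtain the factor $4$ by a short Fiedler-vector argument: take a unit Fiedler vector $x\perp\mathbf{1}$, set $M=\max_i x_i\ge 0$ and $m=\min_i x_i\le 0$; from $(M-x_i)(x_i-m)\ge 0$ summed over $i$ one gets $1=\sum_i x_i^2\le -nMm$, hence $M(-m)\ge\frac1n$ and, by AM--GM, $M-m\ge\frac{2}{\sqrt n}$. Choosing a shortest path (of length $\le D$) between the vertices attaining $M$ and $m$ and applying Cauchy--Schwarz gives $\frac4n\le (M-m)^2\le D\sum_{ij\in E}(x_i-x_j)^2=D\,\alpha(H)$, which is the desired bound. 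The only delicate point is that the spread estimate really yields $2/\sqrt n$ rather than $1/\sqrt n$, as this is exactly what produces the constant $4$.
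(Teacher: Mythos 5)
Your proposal is correct and takes essentially the same route as the paper: the lower bound is quoted from Corollary~\ref{lower}, and the upper bound combines Theorem~\ref{least-cliq} with Mohar's inequality $\alpha(H)\ge \frac{4}{n\,D(H)}$ applied to the kite graph $Ki_{n,r}$ of diameter $n-r+1$, which is precisely what the paper does by citing \cite{mohar1991}. Your extra Fiedler-vector derivation of Mohar's bound (and the correct observation that the constant $4$ comes from the spread estimate $M-m\ge 2/\sqrt{n}$) is sound but redundant, since the paper simply invokes the cited result.
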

\begin{proof}
The lower bound follows from Corollary~\ref{lower}. By \cite{mohar1991} and Theorem~\ref{least-cliq}, we have
$$\alpha(G)\ge \alpha(Ki_{n, r})\ge \frac{4}{n(n-r+1)},$$
which implies $r\le n+1-\frac{4}{n\alpha(G)}$.
\end{proof}


\begin{thebibliography}{}

\bibitem{aouchiche2010}M.~Aouchiche and P.~Hansen, A survey of automated conjectures in spectral graph theory,{\it Linear Algebra Appl.}, {\bf 432}: 2293--2322, 2010.


%\bibitem{belhaiza2005}S.~Belhaiza, N.~M.~M.~de Abreu, P.~Hansen,C.~S.~Oliveira, Variable neighborhood search for extremal graphs, XI,  Bounds on algebraic connectivity, {\it Graph theory and combinatorial optimization, GERAD 25th Anniv. Ser., 8, Springer, New York,} pp. 1-16, 2005.

    \bibitem{B.B}B.~Bollob\'{a}s, P.~Erd\H{o}s and M.~Simonovits, On the structure of edge graphs II, {\it J. London Math. Soc.}(2), {\bf 12}: 219--224, 1976.

\bibitem{bondy2008} J.~A.~Bondy and U.~S.~R.~Murty, {\it Graph Theory with
Applications,}  Graduate Texts in Mathematics series 244, Springer Verlag, 2008.



%\bibitem{bapat2011}R.~B.~Bapat,A.~K.Lal and S.~Pati,On algebraic connectivity of graphs with at most two points of articulation in each block, {\it Linear and Multilinear Algebra,}     60(2012)415-432.

\bibitem{F. Chung1997} F.~Chung, {\it Spectral Graph Theory,}  Providence, RI: Amer. Math. Soc., 1997.



%\bibitem{Cvetkovic}D.~Cvetkovi\'{c}, P.~Rowlinson, and S.~K.~Simi\'{c},
%{\it An Introduction to the
%Theory of Graph Spectra,}  Cambridge University Press, Cambridge, 2010.

\bibitem{erdos1966}P.~Erd\H{o}s  and M.~Simonovits, A limit theorem in graph theory, {\it  Studia Sci. Math. Hungar,}  {\bf 1}: 51--57, 1966.

\bibitem{erdos1946}P.~Erd\H{o}s  and A.~H.~Stone, On the structure of linear graphs, {\it  Bull. Amer. Math. Soc., } {\bf 52}: 1087--1091, 1946.


\bibitem{fiedler1973} M.~Fiedler, Algebraic connectivity of graphs, {\it Czechoslovak
Mathematical Journal, } {\bf 23}: 298--305, 1973.

\bibitem{fiedler1975}M.~Fiedler,A property of eigenvectors of nonnegative symmetric matrices and its application to the graph theory, {\it
     Czechoslovak
Mathematical Journal,}  {\bf 25}: 619--633, 1975.

%\bibitem{feng2007}L.-H.~Feng, Q.~Li and X.-D.Zhang,
%Spectral radii of graphs with given chromatic number, {\it
% Appl. Math. Lett., } {\bf 20}(2007) 158-162.


\bibitem{guiduli1998}B.~Guiduli, {\it Spectral extrema for graphs},  Ph.D thesis,
University of Chicago, 1998.

\bibitem{guo2008}J.-M.~Guo, A conjecture on the algebraic connectivity of connected graphs with fixed girth, {\it Discrete Mathmatics},  {\bf 308}: 5702--5711, 2008.
\bibitem{guo2010}J.-M.~Guo, The algebraic connectivity of graphs under perturbation, {\it Linear Algebra Appl.}, {\bf 433}:1148-1153, 2010.
    
    
    \bibitem{guo2011}J.-M.~Guo, W.-C.~Shiu and  J.X.Li, The algebraic connectivity of lollipop graphs, {\it Linear Algebra Appl.}, {\bf 434}: 2204-2210, 2011.


%\bibitem{horn1985}R.~Horn and C.~Johnson,{\it Matrix Analysis,}  Cambridge University Press, Cambrige,  1985.

\bibitem{He2009}B.~He, Y.-L.~Jin, X.-D.~ Zhang, Sharp bounds for the signless Laplacian spectral radius in terms of clique number,
 {\it Linear Algebra Appl., }  {\bf 438}: 3851--3861, 2013.

%\bibitem{hansen2010}P.~Hansen, C.~Lucas, Bounds and conjectures for
%the signless Laplacian index of graphs, {\it Linear Algebra Appl.,}
%{\bf 432}(2010) 3319-3336.

\bibitem{Kirkland2002}S.~J.~Kirkland, J.~J.~Molitierno,
M.~Neumann, and B.~L.~Shader, On graphs with equal algebraic and vertex
connectivity, {\it Linear Algebra Appl.,}{\bf 341}: 45--56, 2002.

\bibitem{Lu2007}M.~Lu, H.-Q.~Liu, and  F.~Tian, Laplacian spectral bounds for clique and independence
numbers of graphs, {\it  Journal of Combinatorial Theory, Series B} {\bf  97
}: 726--732, 2007.


\bibitem{mohar1991}B.~Mohar, Eigenvalues, diameter and mean distance in graphs, {\it Graphs Combin.},  {\bf 7}: 53--64, 1991.

\bibitem{nikiforov2007} V.~Nikiforov, Bounds on graph eigenvalues
II, {\it Linear Algebra Appl.,} {\bf 427}: 183--189, 2007.

\bibitem{nikiforov2009a} V.~Nikiforov, A spectral
Erd\"{o}s-Stone-Bollob\'{a}s theorem, {\it Combin. Probab. Comput.,}
{\bf 18}: 455--458, 2009.

\bibitem{nikiforov2009b}V.~Nikiforov, Spectral saturation: inverting the spectral Tur\'{a}n theorem, {\it Electronic J. Combin.,}
{\bf 15 }: R33, 2009.

\bibitem{nikiforov2010} V.~Nikiforov, A contribution to the Zarankiewicz problem.
  {\it Linear Algebra Appl.,  } {\bf 432}:1405--1411, 2010.

%\bibitem{nikiforov2010-1} V.~Nikiforov, The spectral radius of graphs
%without paths and cycles of specified length, {\it Linear Algebra
%Appl.,  } {\bf 432}(2010) 2243-2256.

\bibitem{nikiforov2010-2} V.~Nikiforov,
Tur\`{a}n's theorem inverted. {\it Discrete Math.,} {\bf 310}:
125--131, 2010.
\bibitem{nikiforov2011} V.~Nikiforov, Some new results in extremal graph
theory, {\it Surveys in combinatorics 2011, London Math. Soc. Lecture Note Ser., 392,} Cambridge Univ. Press, Cambridge, pp.141--181, 2011.

\bibitem{Nikiforov2013} V.~Nikiforov, The influence of Miroslav Fiedler on spectral graph theory,  {\it Linear Algebra
Appl.,  }  {\bf 439}: 818--821, 2013.
%\bibitem{Sedlar} J.~Sedlar, D. Vuki\v{ c}evi\'{c}, M.~Aouchiche, and P.~Hansen, Variable Neighborhood Search for Extremal
%Graphs. 24. Results about the clique number,
%1233-1240, 2010.

\bibitem{sudakov2005}B.~Sudakov, T.~Szabo and V.~Vu, A
generalization of Tur\'{a}n's theorem, {\it  J. Graph Theory,}
{\bf 49}:187--195, 2005.

\bibitem{Turan1941}P.~Tur\'{a}n, On an extremal problem in graph theory (in Hungarian), {\it  Mat Fiz. Lapok,}
{\bf 48}:436--452, 1941.

\bibitem{wilf1986}H.~S.~Wilf, Spectral bounds for the clique and independence numbers of graphs.
 {\it  J.~Combin. Theory Ser.B,} {\bf 40}: 113--117, 1986.



%\bibitem{erdos1995}P.Erd\"{o}s,Z.F\"{u}redi,R.J.Gould,D.S.Gunderson,Extremal Graphs for Inersecting Triangles,
% {\it J.Combin.Theory Ser.B 89,}
%{\bf No.1}(1995) 89-100.

%\bibitem{yu2012}G.-L.~Yu, Y.-R.~Wu, and J.-L.~Shu, Signless Laplacian spectral radii
%of graphs with given chromatic number, {\it Linear Algebra Appl.,}
%{\bf 435}(2011) 1813-1822.






%\bibitem{zhang2009}X.-D.~Zhang, The signless Laplacian spectral radius of graphs with given
%degree sequences,{\it Discrete Appl. Math.,} {\bf 157}(2009)
%2928-2937.





\end{thebibliography}
\end{document}